\numberwithin{equation}{section}
\newtheorem{thrm}{Theorem}[section]
\newtheorem{cor}[thrm]{Corollary}
\newtheorem{dfn}[thrm]{Definition}
\newtheorem{rmrk}[thrm]{Remark}
\newtheorem{conv}[thrm]{Convention}
\begin{document}

\begin{abstract}
Connections with (skew-symmetric) torsion on non-symmetric
Riemannian manifold satisfying the Einstein metricity condition
(NGT with torsion) are considered. It is shown that an almost
Hermitian manifold is an NGT with torsion if and only if it is a
Nearly K\"ahler manifold. In the case of an almost contact metric
manifold the NGT with torsion spaces are characterized and a
possibly new class of almost contact metric manifolds is
extracted. Similar considerations lead to a definition of a
particular  classes of almost para-Hermitian and almost
paracontact metric manifolds. The conditions are given in terms of
the corresponding Nijenhuis tensors and the exterior derivative of
the skew-symmetric part of the non-symmetric Riemannian metric.
\end{abstract}

\title[Connections on non-symmetric (generalized) Riemannian manifold and gravity]{Connections on non-symmetric (generalized)
Riemannian manifold and gravity}
\date{\today}
\author{Stefan Ivanov}
\address[Ivanov]{University of Sofia "St. Kl. Ohridski"\\
Faculty of Mathematics and Informatics\\
Blvd. James Bourchier 5\\
1164 Sofia, Bulgaria; Institute of Mathematics and Informatics,
Bulgarian Academy of Sciences}
\address{and Department of Mathematics,
University of Pennsylvania, DRL 209 South 33rd Street
Philadelphia, PA 19104-6395 } \email{ivanovsp@fmi.uni-sofia.bg}
\author{Milan Zlatanovi\'c}
\address[Zlatanovi\'c]{Department of Mathematics, Faculty of Science and Mathematics, University of Ni\v s, Vi\v segradska 33, 18000 Ni\v s, Serbia}
\email{zlatmilan@yahoo.com}

\date{\today }
\maketitle \tableofcontents


\setcounter{tocdepth}{2}

\section{Introduction}

In this note we consider connections on a non-symmetric Riemannian
manifold $(M, G=g+F)$ with a general assumption, coming from the
non-symmetric gravitational theory, that the symmetric part $g$ of
$G$ is non-degenerate. There are many examples of generalized
Riemannian manifolds among which it deserves to mention the almost
Hermitian, the almost contact, the almost para-Hermitian, the
almost paracontact manifolds etc.

It is happened that the torsion and the covariant derivative of
the non-symmetric metric determine a linear connection. The
connection is unique and it is completely determined by the
torsion and the Levi-Civita connection of the symmetric part of
the non-symmetric metric. The Levi-Civita covariant derivative of
the skew-symmetric part of the non-symmetric metric and the
torsion supply  a necessary condition for the existence of the
connection.

\subsection{Motivation from general relativity}

General relativity (GR) was developed by A. Einstein in 1916
\cite{Ein1}, with contributions by many others after 1916. In GR
the equation $ds^2=g_{ij}dx^idx^j, \quad (g_{ij}=g_{ji})$
  is valid, where
$g_{ij}$ are functions of the point in the space. In GR which is the
four dimensional space-time continuum metric properties depend on
the mass distribution.
 The magnitudes $g_{ij}$  are known as \emph{gravitational
 potential}.
Christoffel symbols which are commonly expressed by
$\Gamma^k_{ij}$, play the role of the magnitudes which determine
gravitational force field. General relativity explains gravity as
the curvature of space-time.

In the GR the metric tensor  is related by the Einstein equations
$
R_{ij}-\frac 12 R g_{ij}=T_{ij}, $
where $R_{ij}$ is the Ricci tensor of metric of space time, $R$ is
the scalar curvature of the metric, and $T_{ij}$ is the
energy-momentum tensor of matter. In 1922, A. Friedmann
\cite{Fried} found a solution in which the universe may expand or
contract, and later G. Lema\^{\i}tre \cite{Lema} derived a
solution for an expanding universe. However, Einstein believed
that the universe was apparently static, and since a static
cosmology was not supported by the general relativistic field
equations, he added a cosmological constant $\Lambda$ to the field
equations, which became
$
R_{ij}-\frac 12 R g_{ij}+\Lambda g_{ij} =T_{ij}. $

Since 1923 to the end of his life Einstein worked on various
variants of Unified Field Theory (Non-symmetric Gravitational
Theory-NGT) \cite{Ein}. This theory had the aim to unite the
gravitation theory, to which is related GR, and the theory of
electromagnetism. Introducing different variants of his NGT,
Einstein used a complex basic tensor, with symmetric real part and
skew-symmetric imaginary part. Beginning with 1950. Einstein used
real non-symmetric basic tensor $G$, sometimes called
\emph{generalized Riemannian metric/manifold}).

Remark that at NGT the symmetric part $g_{ij}$ of the basic tensor
$G_{ij} (G_{ij}=g_{ij}+F_{ij})$ is related to gravitation, and
skew-symmetric one $F_{ij}$ to electromagnetism. The same is valid
for the symmetric part of the connection and torsion tensor,
respectively.

More recently the idea of a non-symmetric metric tensor appears in
Moffat's non-symmetric gravitational theory \cite{Mof}.
In Moffat's theory the skew-symmetric part of the metric tensor
represents a Proca field (massive Maxwell field) which is a part
of the gravitational interaction, contributing to the rotation of
galaxies.

While on a Riemannian space  the connection coefficients are
expressed by virtue of the metric, $g_{ij}$, at Einstein's works
on NGT the connection between these magnitudes is determined by
the so called \emph{Einstein metricity condition} i.e. the
non-symmetric metric tensor $G$ and the connection components
$\Gamma_{ij}^k$ are connected with the equations
\begin{equation}\label{metein}
\frac{\partial G_{ij}}{\partial x^m}-\Gamma ^p_{im}G_{pj}- \Gamma
^p_{mj}G_{ip}=0.
\end{equation}
A generalized Riemannian manifold satisfying the Einstein
metricity condition \eqref{metein} is also called NGT-space
\cite{Ein,LMof,Mof}.

The choice of a connection in NGT is not uniquely determined. In
particular, in NGT there exist two kinds of covariant derivative,
for example for tensor $a^i_j:$
\begin{equation*}
 a{}^{\underset+i}_{\underset+j|m}=\frac{\partial a^i_{j}}{\partial x^m}
 +\Gamma^i_{pm}a^p_j-\Gamma^p_{jm}a^i_p; \quad
 a{}^{\underset-i}_{\underset-j|m}=\frac{\partial a^i_{j}}{\partial
 x^m}+\Gamma^i_{mp}a^p_j-\Gamma^p_{mj}a^i_p,
 \end{equation*}

We investigate  connections on a generalized Riemannian manifold
$(M, G=g+F)$ with a general assumption, coming from NGT, that the
symmetric part $g$ of $G$ is non-degenerate. There are many
examples of generalized Riemannian manifolds such as  the almost
Hermitian, the almost contact, the almost para-Hermitian, the
almost paracontact manifolds etc. In these cases the
skew-symmetric part of $G$ is played by the fundamental 2-form
$F$.

We show that the torsion and the covariant derivative of $G$
determine  a unique linear connection $\nabla$ with given torsion
$T$ and covariant derivative $\nabla G$ provided the covariant
derivative of the skew-symmetric part $F$ of $G$, $\nabla F$
satisfies certain compatibility condition which we expressed in
terms of the torsion and the covariant derivative of the symmetric
part $g$ of $G$, $\nabla g$ (Theorem~\ref{gmain}). The connection
is unique and it is completely determined from the symmetric part
$g$ of $G$.

We look for a linear connections with torsion preserving the
non-symmetric tensor $G$.  Surprisingly, we find that a linear
connection $\nabla$ preserves the non-symmetric metric, $\nabla
G=0$ if and only if it preserves its symmetric and skew-symmetric
parts, $\nabla g=\nabla F=0$. The last condition leads to a
necessary constrain, i.e. a PDE for the skew-symmetric part $F$ of
$G$ expressed in terms of the torsion. We show that there exists a
unique connection preserving $G$ provided its torsion is given and
the mentioned above PDE is satisfied (c.f. Theorem~\ref{main}).

We paid a special attention when the torsion of the connection
preserving the generalized Riemannian metric $G$ is totally
skew-symmetric with respect to the symmetric part $g$ of $G$. One
reason for that comes from the supersymmetric string theories and
non-liner $\sigma$-models (see e.g.
\cite{Str,BBDG,BBE,BBDP,y4,BSethi,GKMW,GMPW,GMW,GKP,GPap} and
references therein) as well as from gravity theory itself
\cite{Ham}. We find a condition in terms of the Nijenhuis tensor
and the exterior derivative of the skew-symmetric part $F$ of $G$
and show that this condition is necessary and sufficient for the
existence of connection with skew-symmetric torsion preserving the
generalized Rieamannian metric $G$ (c.f. Theorem~\ref{skew}). We
note that such a conditions were previously known on almost
Hermitian, almost contact, almost para-Hermitian, almost
paracontact manifolds \cite{FI,GKMW,Zam}. We derive the results
there as a consequence of our considerations.

In Section~\ref{ngt} we restrict our considerations to the NGT,
i.e. a generalized Riemannian manifold satisfying the Einstein
metricity condition \eqref{metein}. We present a condition in
terms of the Nijenhuis tensor and the exterior derivative of $F$
which guarantees the existence of a unique linear connection with
skew-symmetric torsion preserving the NGT structure and show that
the torsion is equal to minus one third of the exterior derivative
$dF$ of $F$ (Theorem~\ref{pp1}).

One of the main contributions is the application of this result to
the almost Hermitian and almost contact metric manifolds.

A careful analysis of our general condition for the existence of a
connection with skew-symmetric torsion satisfying the Einstein
metricity condition in the case of almost Hermitian manifold allow
us to conclude that an almost Hermitian manifold satisfies the
Einstein metricity condition with respect to a connection with
skew-symmetric torsion if and only if it is a Nearly K\"ahler
manifold (Theorem~\ref{nika}). In other words, an almost Hermitian
manifolds is NGT with slew-symmetric torsion exactly when it is a
Nearly K\"ahler manifold. In this case the connection coincides
with the Gray connection \cite{Gr1,Gr2,Gr3} which is the unique
connection with skew-symmetric torsion preserving the Nearly
K\"ahler structure \cite{FI}. The  Nearly K\"ahler manifolds
(called almost Tachibana spaces in \cite{Yano}) were developed by
A. Gray \cite{Gr1,Gr2,Gr3} and intensively studied since then in
\cite{Ki,N1,MNS,N2,But,FIMU}. Nearly K\"ahler manifolds appear
also in supersymmetric string theories (see e.g.
\cite{Po1,PS,LNP,GLNP}. The first complete inhomogeneous examples
of 6-dimensional Nearly K\"ahler manifolds were presented
recently in \cite{FosH}.

Applying the general condition for the existence of a connection
with skew-symmetric torsion satisfying the Einstein metricity
condition to the almost contact metric manifold we arrived at a
possibly new class of almost contact metric manifolds, which we
call here \emph{almost-nearly cosymplectic}. We show that this is
the precise class of almost contact metric manifold which are NGT
with skew-symmetric torsion, i.e. admitting a connection with
skew-symmetric torsion satisfying the Einstein metricity
condition. The simplest example of almost-nearly cosymplectic
manifold is the trivial circle bundle over a (compact) nearly
K\"ahler manifold.

The cases of almost para-Hermitian and almost paracontact metric
manifold are also treated with respect to the NGT space with
skew-symmetric torsion. We show that an almost para-Hermitian
manifold admits an NGT connection with totally skew symmetric
torsion if and only if it admits a linear connection preserving
the almost para-Hermitian structure with a totally skew-symmetric
torsion. We extract a possibly new class of almost paracontact
metric structures which are NGT with skew-symmetric torsion. We
characterized this class by an explicit formula for the covariant
derivative with respect to the Levi-Civita connection of the
fundamental two form.

\section{The geometric model}
\label{geomod}

The fundamental (0,2) tensor  $G$ in non-symmetric (generalized)
Riemannian manifold $(M,G)$ is in general non-symmetric. It is
decomposed in two parts, the symmetric part $g$ and the
skew-symmetric part $F$, $G(X,Y)=g(X,Y)+F(X,Y),$ where
\begin{equation}\label{metric}
g(X,Y)=\frac12(G(X,Y)+G(Y,X)), \qquad F(X,Y)=\frac12(G(X,Y)-G(Y,X)).
 \end{equation}
We assume that the symmetric part is non-degenerate of arbitrary
signature. Therefore, we obtain a well defined (1,1) tensor $A$
determined by the condition
\begin{equation}\label{m1}
F(X,Y)=g(AX,Y).
\end{equation}

We look for a natural linear connections $\nabla$ preserving the
generalized Riemannian metric $G, \nabla G=0$  with torsion
$T(X,Y)=\nabla_XY-\nabla_YX-[X,Y]$.

\begin{conv}
In the whole paper we shall use the capital Latin letters $X,Y,..$
to denote smooth vector fields on a smooth manifold $M$ which
commute, $[X,Y]=0$. Hence, $T(X,Y)=\nabla_XY-\nabla_YX$.
\end{conv}

The Levi-Civita connection corresponding to the symmetric
non-degenerate (0,2) tensor $g$ we denote with $\nabla^g$. The
Koszul formula reads
\begin{equation}\label{lcg}
g(\nabla^g_XY,Z)=\frac12\big[Xg(Y,Z)+Yg(X,Z)-Zg(Y,X).\Big]
\end{equation}
We denote the (0,3) torsion tensor with respect to $g$ by the same
letter, $
T(X,Y,Z):=g(T(X,Y),Z). $
\subsection{Linear connections on generalized Riemannian manifolds}
In this section we show that a linear connection $\nabla$  with
torsion tensor $T$ on a generalized Riemannian manifold is
completely determined by the torsion and the covariant derivative
$\nabla g$ of the symmetric part  $g$ of $G$. More precisely, we
have
\begin{thrm}\label{gmain}
Let $(M,G=g+F)$ be a generalized Riemannian manifold and
$\nabla^g$ be the Levi-Civita connection of $g$. Let $\nabla$ be a
linear connection with torsion $T$ and denote the covariant
derivative of the symmetric part $g$ of $G$ by $\nabla g$. Then
$\nabla$ is unique determined by the following formula
\begin{multline}\label{cgencon}
g(\nabla_XY,Z)=g(\nabla^g_XY,Z)+\frac12\Big[T(X,Y,Z)+T(Z,X,Y)-T(Y,Z,X)\Big]\\
-\frac12\Big[(\nabla_Xg)(Y,Z)+(\nabla_Yg)(Z,X)-(\nabla_Zg)(Y,X)\Big].
\end{multline}
The covariant derivative $\nabla F$ of the skew-symmetric part $F$
of $G$ is given by
\begin{multline}\label{cconn2}
(\nabla_XF)(Y,Z)=(\nabla^g_XF)(Y,Z)+\frac12\Big[T(X,Y,AZ)+T(Z,X,AY)\Big]\\+
\frac12\Big[T(AZ,X,Y)+T(AZ,Y,X)+T(X,AY,Z)+T(Z,AY,X)\Big]\\+
\frac12\Big[(\nabla_Xg)(AY,Z)-(\nabla_Xg)(Y,AZ)-(\nabla_Yg)(AZ,X)\Big]\\+\frac12\Big[(\nabla_Zg)(AY,X)+(\nabla_{AZ}g)(Y,X)-(\nabla_{AY}g)(Z,X)\Big].
\end{multline}
In particular, the exterior derivative $dF$ of $F$ satisfies
\begin{multline}\label{cconn1}
dF(X,Y,Z)
=-T(X,Y,AZ)-T(Y,Z,AX)-T(Z,X,AY)\\+(\nabla_XF)(Y,Z)+(\nabla_YF)(Z,X)+(\nabla_ZF)(X,Y).
\end{multline}
Conversely, any three tensors $T,\nabla g,\nabla F$ satisfying
\eqref{cconn2} determine a unigue linear connection via
\eqref{cgencon}.
\end{thrm}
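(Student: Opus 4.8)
The plan is to obtain \eqref{cgencon} as a master identity and to read off the three remaining formulas from it. I would start from the defining relation $(\nabla_Xg)(Y,Z)=Xg(Y,Z)-g(\nabla_XY,Z)-g(Y,\nabla_XZ)$ and form the cyclic combination of $Xg(Y,Z)$, $Yg(Z,X)$, $Zg(X,Y)$ with signs $(+,+,-)$, rewriting it entirely through the terms $g(\nabla_\bullet\bullet,\bullet)$ and $(\nabla_\bullet g)(\bullet,\bullet)$. Using the standing convention $[X,Y]=0$, so that $\nabla_XY-\nabla_YX=T(X,Y)$, I would replace $\nabla_XZ$, $\nabla_YZ$ and $\nabla_YX$ by $\nabla_ZX$, $\nabla_ZY$ and $\nabla_XY$ modulo torsion. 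The decisive point is that the four mixed connection terms cancel in pairs, leaving $2g(\nabla_XY,Z)$ together with three torsion contractions; inserting the Koszul formula \eqref{lcg} and dividing by two gives \eqref{cgencon}, the stated ordering being obtained from the skew-symmetry $T(X,Z,Y)=-T(Z,X,Y)$ and from the symmetry of $\nabla g$ in its last two arguments.

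For \eqref{cconn2} I would first note that \eqref{m1} together with the skew-symmetry of $F$ forces $A$ to be skew-adjoint, $g(AX,Y)=-g(X,AY)$. Writing $D_XY:=\nabla_XY-\nabla^g_XY$ for the difference tensor, the Leibniz rule gives
\[
(\nabla_XF)(Y,Z)-(\nabla^g_XF)(Y,Z)=-F(D_XY,Z)-F(Y,D_XZ)=g(D_XY,AZ)-g(D_XZ,AY),
\]
where the last equality uses skew-adjointness of $A$. Since $g(D_XY,Z)$ is exactly the tensorial remainder of \eqref{cgencon}, substituting $Z\mapsto AZ$ in the first term and $(Y,Z)\mapsto(Z,AY)$ in the second produces precisely the torsion and $\nabla g$ terms of \eqref{cconn2}, once the orderings are adjusted with the skew-symmetry of $T$ in its first two slots and the symmetry of $\nabla g$. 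For \eqref{cconn1} I would use that for commuting fields $dF(X,Y,Z)=XF(Y,Z)+YF(Z,X)+ZF(X,Y)$; summing the Leibniz expansions of $(\nabla_XF)(Y,Z)$ cyclically and collecting the connection terms by their last argument leaves $F(T(X,Y),Z)-F(T(X,Z),Y)+F(T(Y,Z),X)$, and rewriting each $F(T(\cdot,\cdot),\cdot)$ as a torsion contraction through skew-adjointness of $A$ yields \eqref{cconn1}.

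For the converse, given tensors $T$, $\nabla g$ (symmetric in its last two arguments) and $\nabla F$ satisfying \eqref{cconn2}, I would define $\nabla$ by \eqref{cgencon}. Because the right-hand side differs from $g(\nabla^g_XY,Z)$ by a $(0,3)$-tensor and $g$ is non-degenerate, this determines a unique $(1,2)$-tensor $S$ with $\nabla=\nabla^g+S$, hence a unique linear connection, and it remains to check that $\nabla$ reproduces the prescribed data. Evaluating $g(\nabla_XY-\nabla_YX,Z)$, the $\nabla g$ contributions cancel by symmetry and the torsion contributions collapse to $T(X,Y,Z)$ using $T(Z,X,Y)+T(X,Z,Y)=0$ and $T(Z,Y,X)+T(Y,Z,X)=0$, so the torsion of $\nabla$ is $T$. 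Evaluating $Xg(Y,Z)-g(\nabla_XY,Z)-g(Y,\nabla_XZ)$ and using $\nabla^gg=0$ reduces it to $-g(D_XY,Z)-g(D_XZ,Y)$; the torsion part of this symmetrization drops out and the $\nabla g$ part returns the prescribed $(\nabla_Xg)(Y,Z)$. Finally, the $\nabla F$ induced by $\nabla$ coincides with the prescribed one because both satisfy \eqref{cconn2}, whose right-hand side is assembled solely from $T$, $\nabla g$ and the fixed data $g$, $F$, $\nabla^gF$.

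The argument is in the end an organized bookkeeping of three-argument torsion and $\nabla g$ contractions, and the main obstacle is combinatorial: arranging these terms so that the pairwise cancellations are transparent and the mildly asymmetric orderings displayed in \eqref{cgencon}--\eqref{cconn2} are reproduced verbatim. The one genuinely conceptual point is that, once $T$ and $\nabla g$ are fixed, the value of $\nabla F$ is forced; this is why \eqref{cconn2} is properly read as an integrability constraint on the data rather than as independent information, and it is what makes the converse a clean uniqueness statement.
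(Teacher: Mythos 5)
Your proof is correct, and all four pieces check out: the generalized Koszul computation does yield \eqref{cgencon}, the difference-tensor substitution $g(D_XY,AZ)-g(D_XZ,AY)$ reproduces \eqref{cconn2} term by term (I verified the bookkeeping), the cyclic Leibniz sum for $F$ gives \eqref{cconn1}, and your converse argument is sound. However, your route is organized differently from the paper's. The paper works throughout with the non-symmetric tensor $G$ itself: it writes the three Leibniz identities for $\nabla G$, combines them with signs $(+,+,-)$, and because the skew part $F$ of $G$ enters this combination, the exterior derivative $dF$ appears automatically; the paper therefore must first isolate \eqref{cconn1} (as the skew-symmetric part in $X,Y$ of the resulting identity), then substitute it back to obtain \eqref{cgencon}, and only afterwards derive \eqref{cconn2} by inserting \eqref{cgencon} into the Leibniz rule for $\nabla F$. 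You instead never form the $G$-combination: you run the Koszul argument on $g$ alone, which gives \eqref{cgencon} directly and pushes all the $F$-information into separate, later steps. Your ordering (\eqref{cgencon}, then \eqref{cconn2}, then \eqref{cconn1}) is the reverse of the paper's, and it has two advantages: it makes transparent that \eqref{cgencon} is nothing but the standard formula for a connection with prescribed torsion and non-metricity, independent of $F$; and it forces you to spell out the converse (torsion of the defined connection is $T$, its non-metricity is the prescribed $\nabla g$, and \eqref{cconn2} is exactly the compatibility condition pinning down $\nabla F$), which the paper dismisses as a ``straightforward computation.'' What the paper's route buys in exchange is that the identity \eqref{cconn1}, which is the one actually used later in the NGT sections, falls out at the very first step rather than requiring a separate cyclic-sum argument.
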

\begin{proof}
It follows from \eqref{m1} that the (1,1) tensor $A$ is
skew-symmetric with respect to the pseudo-Riemannian metric $g$,
$g(AX,Y)=-g(X,AY).$
A simple calculation  using \eqref{m1} yields
\begin{multline}\label{a2}
(\nabla_XF)(Y,Z)=Xg(AY,Z)-g(A\nabla_XY,Z)-g(AY,\nabla_XZ)\\=(\nabla_Xg)(AY,Z)+g((\nabla_XA)Y,Z).
\end{multline}
From the definition of the covariant derivative of a (0,2) tensor,
we have
\begin{equation}\label{covG}
\begin{aligned}
(\nabla_XG)(Y,Z)=XG(Y,Z)-G(\nabla_XY,Z)-G(Y,\nabla_XZ);\\
(\nabla_YG)(Z,X)=YG(Z,X)-G(\nabla_YZ,X)-G(Z,\nabla_YX);\\
(\nabla_ZG)(Y,X)=ZG(Y,X)-G(\nabla_ZY,X)-G(Y,\nabla_ZX).
\end{aligned}
\end{equation}
Sum the first two equalities and subtract the third equality to get
using \eqref{metric} and the definition of the torsion that
\begin{multline}\label{ccon1}
(\nabla_XG)(Y,Z)+(\nabla_YG)(Z,X)-(\nabla_ZG)(Y,X)=
Xg(Y,Z)+Yg(Z,X)-Zg(Y,X)\\+dF(X,Y,Z)-2g(\nabla_XY,Z)+G(Z,T(X,Y))-G(Y,T(X,Z))-G(T(Y,Z),X).
\end{multline}
Using the Koszul formula \eqref{lcg}, we obtain from \eqref{ccon1}
that
\begin{multline}\label{ccon2}
2g(\nabla_XY,Z)=2g(\nabla^g_XY,Z) +
dF(X,Y,Z)+G(Z,T(X,Y))\\-G(Y,T(X,Z))-G(T(Y,Z),X)
-(\nabla_XG)(Y,Z)-(\nabla_YG)(Z,X)+(\nabla_ZG)(Y,X).
\end{multline}
The skew-symmetric part with respect to $X,Y$ of \eqref{ccon2} gives
precisely the equation \eqref{cconn1}.

Substitute \eqref{cconn1} into \eqref{ccon2} and use
\eqref{metric} to get \eqref{cgencon}. Insert the already prooved
\eqref{cgencon} into
$$(\nabla_XF)(Y,Z)=XF(Y,Z)-F(\nabla_XY,Z)-F(Y,\nabla_XZ),$$  use
\eqref{m1} and \eqref{a2} to obtain \eqref{cconn2}.

To complete the proof, observe that \eqref{cconn2} implies
\eqref{cconn1}. Indeed, taking the cyclic sum of \eqref{cconn2} we
get \eqref{cconn1}.

The converse follows from \eqref{cgencon} by straightforward
computation.
\end{proof}

\subsection{Metric connections on generalized Riemannian manifold} Here we investigate the existence of
metric connections on a generalized Riemannian manifold. We have
\begin{thrm}\label{main}
Let $(M,G=g+F)$ be a generalized Riemannian manifold and
$\nabla^g$ be the Levi-Civita connection of $g$.
\begin{enumerate}
\item[a)] A linear connection $\nabla$ preserves the generalized
Riemannian metric $G$ if and only if it  preserves its symmetric
part $g$ and its skew-symmetric part $F$,
$
\nabla G=0 \Leftrightarrow \nabla g=\nabla F=0 \Leftrightarrow
\nabla g=\nabla A=0.
$ 
\item[b)] If there exists a linear connection $\nabla$ preserving the  generalized Riemannian metric $G, \nabla G=0$
with torsion $T$ then  the following  condition holds
\begin{multline}\label{conn2}
(\nabla^g_XF)(Y,Z)=-\frac12\Big[T(X,Y,AZ)+T(Z,X,AY)\Big]\\-
\frac12\Big[T(AZ,X,Y)+T(AZ,Y,X)+T(X,AY,Z)+T(Z,AY,X)\Big].
\end{multline}
In particular, the exterior derivative of $F$ satisfies the next
equality
\begin{multline}\label{conn1}
dF(X,Y,Z)=F(T(X,Y),Z)+F(T(Y,Z),X)+F(T(Z,X),Y),\quad \rm equivalently\\
dF(X,Y,Z)=-T(X,Y,AZ)-T(Y,Z,AX)-T(Z,X,AY);
\end{multline}
\indent Conversely, if the condition \eqref{conn2} is valid then
there exists a unique linear connection $\nabla$ with torsion $T$
preserving the generalized Riemannian metric $G$ 
determined by the torsion T   with the  formula
\begin{equation}\label{gencon}
g(\nabla_XY,Z)=g(\nabla^g_XY,Z)+\frac12\Big[T(X,Y,Z)+T(Z,X,Y)-T(Y,Z,X)\Big].
\end{equation}
\end{enumerate}
\end{thrm}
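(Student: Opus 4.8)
The plan is to reduce everything to Theorem~\ref{gmain}, which already pins down any torsion-$T$ connection in terms of $\nabla^g$ and $\nabla g$ via \eqref{cgencon} and records $\nabla F$ through \eqref{cconn2}.

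For part a), I would first observe that covariant differentiation commutes with the algebraic (anti)symmetrization in the last two slots, so the symmetric part of $(\nabla_X G)(Y,Z)$ in $Y,Z$ is exactly $(\nabla_X g)(Y,Z)$ and the skew-symmetric part is $(\nabla_X F)(Y,Z)$. Hence $\nabla G=0$ is equivalent to the simultaneous vanishing $\nabla g=0$ and $\nabla F=0$. To get the third equivalence I would invoke \eqref{a2}: when $\nabla g=0$ it collapses to $(\nabla_X F)(Y,Z)=g((\nabla_X A)Y,Z)$, and the non-degeneracy of $g$ then gives $\nabla F=0\iff\nabla A=0$.

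For the forward implication of part b), assuming $\nabla G=0$ I would use part a) to set $\nabla g=0$ and $\nabla F=0$, and substitute both into \eqref{cconn2}. All the $\nabla g$-terms drop and the left-hand side vanishes, which is precisely the stated constraint \eqref{conn2}. The same substitution in \eqref{cconn1} yields the second (torsion) form of \eqref{conn1}; the first (metric) form then follows from the skew-symmetry $g(AX,Y)=-g(X,AY)$ together with \eqref{m1}, since $-T(X,Y,AZ)=-g(T(X,Y),AZ)=g(AT(X,Y),Z)=F(T(X,Y),Z)$, applied cyclically.

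For the converse I would define $\nabla$ by \eqref{gencon}, i.e.\ by \eqref{cgencon} with the $\nabla g$-terms set to zero, and check two things. First, that this $\nabla$ is $g$-metric: \eqref{gencon} is the classical formula for the metric connection with prescribed torsion $T$, so a short direct computation of $(\nabla_X g)(Y,Z)=Xg(Y,Z)-g(\nabla_X Y,Z)-g(Y,\nabla_X Z)$ from \eqref{gencon} shows the torsion contributions cancel and $\nabla g=0$. Second, with $\nabla g=0$ in hand, I would feed this $\nabla$ back into \eqref{cconn2}; again the $\nabla g$-terms vanish, and the hypothesis \eqref{conn2} exactly cancels the six remaining torsion terms against $(\nabla^g_X F)(Y,Z)$, giving $\nabla F=0$. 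By part a), $\nabla g=\nabla F=0$ yields $\nabla G=0$, and uniqueness is immediate from Theorem~\ref{gmain}, which already asserts that $T$ together with $\nabla g=0$ determines the connection via \eqref{cgencon}. The computation is essentially bookkeeping once Theorem~\ref{gmain} is available; I expect the only delicate point, and the main obstacle, to be the termwise matching of the six torsion monomials of \eqref{conn2} with those appearing in \eqref{cconn2} and the verification that the torsion terms cancel in the identity $\nabla g=0$ for the connection \eqref{gencon}, rather than any conceptual difficulty.
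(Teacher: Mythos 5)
Your proposal is correct, and its skeleton --- reducing everything to Theorem~\ref{gmain} --- matches the paper's proof: for part b) you set $\nabla g=\nabla F=0$ in \eqref{cconn2} and \eqref{cconn1}, and for the converse you invoke the final assertion of Theorem~\ref{gmain} with $\nabla g=0$, which is literally what the paper does (``Set $\nabla g=\nabla F=0$ into Theorem~\ref{gmain}\dots Apply Theorem~\ref{gmain} to complete the proof''). The genuine difference is in part a). The paper treats $\nabla g$ and $\nabla F$ as unknowns and runs a longer argument: it takes the cyclic sum of $0=\nabla g+\nabla F$, notes that the cyclic sum of the $\nabla g$-terms is totally symmetric while that of the $\nabla F$-terms is totally skew-symmetric (so each vanishes separately, giving \eqref{new1}), then symmetrizes $\nabla G$ over the \emph{first} two arguments (the differentiation direction and the first slot) to obtain \eqref{new2}, and only after combining these identities concludes $\nabla g=\nabla F=0$. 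Your argument is shorter and sharper: since $g$ is symmetric and $F$ is skew, for \emph{any} linear connection $\nabla_X g$ and $\nabla_X F$ are exactly the symmetric and skew-symmetric parts of the bilinear form $(\nabla_X G)(\cdot,\cdot)$ in its last two slots, so $\nabla G=0$ forces both to vanish with essentially no computation; the equivalence with $\nabla A=0$ then follows from \eqref{a2} and non-degeneracy of $g$, as in the paper. Both routes are valid; the paper's has the side effect of producing the identities \eqref{new1}--\eqref{new2}, which prefigure the analogous (and there unavoidable) cyclic-sum manipulations \eqref{ein2}--\eqref{ein4} in the NGT setting of Section~\ref{ngt}, where the right-hand side $-G(T(X,Y),Z)$ of \eqref{metein1} does not split slot-wise into symmetric and skew parts, whereas your decomposition is the cleaner proof of a) itself. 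One small point of care in your converse: besides checking $\nabla g=0$ for the connection defined by \eqref{gencon}, you should also note that its torsion really is $T$ (a one-line computation using the skew-symmetry of $T$ in its first two arguments); the paper is equally terse on this, so it is a shared, harmless omission.
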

\begin{proof}
Suppose we have $0=\nabla G=\nabla g + \nabla F$. The cyclic sum of
this equality yields
\begin{multline*}
(\nabla_Xg)(Y,Z)+(\nabla_Yg)(Z,X)+(\nabla_Zg)(X,Y)\\=-(\nabla_XF)(Y,Z)-(\nabla_YF)(Z,X)-(\nabla_ZF)(X,Y).
\end{multline*}
Observe that the left hand side is totally symmetric while the
right hand side is totally skew-symmetric which leads to the
vanishing of each side,
\begin{equation}\label{new1}
\begin{split}
(\nabla_Xg)(Y,Z)+(\nabla_Yg)(Z,X)+(\nabla_Zg)(X,Y)=0;\\(\nabla_XF)(Y,Z)+(\nabla_YF)(Z,X)+(\nabla_ZF)(X,Y)=0.
\end{split}
\end{equation}
On the other hand, the symmetric part of $\nabla G$ with respect to
the first two arguments gives
$$(\nabla_Xg)(Y,Z)+(\nabla_Yg)(X,Z)+(\nabla_XF)(Y,Z)+(\nabla_YF)(X,Z)=0$$
which combined with the first identity in \eqref{new1} yields
\begin{equation}\label{new2}(\nabla_Zg)(X,Y)=(\nabla_XF)(Y,Z)+(\nabla_YF)(X,Z).\end{equation}
Substitute \eqref{new2} into  $\nabla G=\nabla g+\nabla F=0$ to
get
$$(\nabla_XF)(Y,Z)+(\nabla_YF)(X,Z)+(\nabla_ZF)(X,Y)=0$$
which combined with the second equality in \eqref{new1} gives
$2(\nabla_YF)(X,Z)=0$. Now  $\nabla g=0$ follows from
\eqref{new2}. Clearly, the conditions $\nabla g=\nabla F=0$ imply
$\nabla G=0$. We conclude from \eqref{a2} that a metric connection
$\nabla, \nabla g=0$ preserves the two form $F$ if and only if it
preserves the (1,1) tensor $A$, $\nabla F=0\Leftrightarrow\nabla
A=0$ since $g$ is non-degenerate. This proofs a).

Set $\nabla g=\nabla F=0$ into Theorem~\ref{gmain} to conclude b).
Apply Theorem~\ref{gmain} to complete the proof.
\end{proof}
\subsection{The torsion tensor, skew-symmetric torsion} It follows from Theorem~\ref{main} that any linear connection
preserving the generalized Riemannian metric is completely
determined by the torsion tensor $T$. In this section we
investigate the torsion tensor. To this end we recall the
definition of the Nijenhuis tensor $N$ of the (1,1) tensor $A$
(see e.g.\cite{KN}),
\begin{equation}\label{nuj}
N(X,Y)=[AX,AY]+A^2[X,Y]-A[AX,Y]-A[X,AY].
\end{equation}
The Nijenhuis tensor is skew-symmetric by definition and it plays
a fundamental role in almost complex (resp. almost para-complex)
geometry. If $A^2=-1$ (resp. $A^2=1$) then the celebrated
Nulander-Nirenberg theorem (see, e.g. \cite{KN}) shows that an
almost complex structure  is integrable if and only if the
Nijenhuis tensor vanishes.

Let $\nabla$ be a linear connection preserving the generalized
Riemannian metric $G, \nabla G=0$. The $\nabla$ preserves $g,F$ and
$A$, $\nabla g=\nabla F=\nabla A=0$. Using the definition of the
torsion and the covariant derivative  $\nabla A$ we  can express the
Nijenhuis tensor of terms of the torsion and $\nabla A$ as follows
\begin{multline}\label{nuj1}
N(X,Y)=(\nabla_{AX}A)Y-(\nabla_{AY}A)X-A(\nabla_{X}A)Y+A(\nabla_{Y}A)X\\-T(AX,AY)-A^2T(X,Y)+AT(AX,Y)+AT(X,AY).
\end{multline}
We denote the Nijenhuis tensor of type (0,3) with respect to $g$
with the same letter, $N(X,Y,Z):=g(N(X,Y),Z)$. Set $\nabla A=0$
into \eqref{nuj1} and use \eqref{m1} to get
\begin{equation}\label{nuj2}
N(X,Y,Z)=-T(AX,AY,Z)-T(X,Y,A^2Z)-T(AX,Y,AZ)-T(X,AY,AZ).
\end{equation}
\subsubsection{The skew-symmetric torsion} Linear connections
preserving a (pseudo) Riemannian metric and having totally
skew-symmetric torsion i.e. the torsion satisfies the condition
\begin{equation}\label{st}
T(X,Y,Z)=-T(X,Z,Y),
\end{equation}
become very attractive in the last twenty years mainly due to the
relations with supersymmetric string theories (see e.g.
\cite{Str,GMW,GPap,y4,BSethi} and references therein, for a
mathematical treatment  consult the nice overview \cite{Agr}). The
main point is that the number of preserved supersymmetries is
equal to the number of parallel spinors with respect to such a
connection. This property reduces the holonomy group of the
connection to be a subgroup of a group which is stabilizer of a
non-trivial spinor such as  $SU(n), Sp(n), G_2, Spin(7)$. It is
happened that in that cases such a connection is unique
\cite{Str,GKMW,FI,Iv} determined entirely by the structure induced
by the parallel spinor.

We investigate when a generalized Riemannian manifold admits a
metric connection with skew-symmetric torsion. We have

\begin{thrm}\label{skew}
 Let $(M,G=g+F)$ be a generalized Riemannian manifold and
$\nabla^g$ be the Levi-Civita connection of $g$. If there exists a
linear connection $\nabla$ preserving the generalized Riemannian
metric $G, \nabla G=0$ with totally skew-symmetric torsion $T$
then the following condition holds
\begin{equation}\label{ndf1}
N(X,Y,AZ)+N(X,Z,AY)=dF(X,Y,A^2Z)+dF(X,Z,A^2Y).
\end{equation}
The torsion tensor satisfies the equality
\begin{equation}\label{tor1}
\begin{split}
T(AX,AY,Z)=-N(X,Y,Z)+dF(X,Y,AZ);\\
T(AX,Y,Z)=2(\nabla^g_XF)(Y,Z)-dF(X,Y,Z).
\end{split}
\end{equation}
The torsion connection $\nabla$ is determined by the formula
\begin{equation}\label{skct}
g(\nabla_XY,Z)=g(\nabla^g_XY,Z)+\frac12T(X,Y,Z).
\end{equation}
If in addition the skew-symmetric part $F$ of the generalized
Riemannian metric $G$ is closed, $dF=0$, then \eqref{ndf1} and
\eqref{tor1} hold inserting $dF=0$.
\end{thrm}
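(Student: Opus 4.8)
The plan is to treat this as a specialization of the metric-connection results already established, feeding in the single extra hypothesis that $T$ is a $3$-form. Since $\nabla G=0$, Theorem~\ref{main} applies: the connection preserves both parts, $\nabla g=\nabla F=\nabla A=0$, so the formulas \eqref{conn2}, \eqref{conn1} and \eqref{gencon} hold and the Nijenhuis identity \eqref{nuj2} is available. The only new input is the skew-symmetry condition \eqref{st}; combined with the built-in antisymmetry of the torsion in its first two slots it makes $T(\cdot,\cdot,\cdot)$ totally antisymmetric, hence invariant under cyclic permutations and sign-reversing under transpositions. Every assertion of the theorem will follow by substituting this symmetry into the identities above.

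First I would dispose of \eqref{skct}: in \eqref{gencon} total antisymmetry gives $T(Z,X,Y)=T(Y,Z,X)=T(X,Y,Z)$, so the bracket collapses to $\tfrac12 T(X,Y,Z)$. Next, for the two formulas in \eqref{tor1}, the idea is to recognize the right-hand sides as cyclic combinations of torsion terms that telescope. For the first, I would solve \eqref{nuj2} for $T(AX,AY,Z)$ and add it to the expansion of $dF(X,Y,AZ)$ obtained by replacing $Z$ with $AZ$ in \eqref{conn1}; the cross terms $T(AX,Y,AZ)$ and $T(X,AY,AZ)$ cancel against their cyclic images $T(Y,AZ,AX)$ and $T(AZ,X,AY)$, leaving exactly $T(AX,AY,Z)=-N(X,Y,Z)+dF(X,Y,AZ)$. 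For the second, in \eqref{conn2} the pair $T(AZ,X,Y)+T(AZ,Y,X)$ and the pair $T(Z,X,AY)+T(Z,AY,X)$ each vanish by antisymmetry in the last two slots, so $2(\nabla^g_XF)(Y,Z)=-T(X,Y,AZ)-T(X,AY,Z)$; subtracting $dF(X,Y,Z)$ in the form \eqref{conn1} and using cyclic invariance then produces $T(AX,Y,Z)$.

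The conceptual heart of the theorem is the compatibility condition \eqref{ndf1}, and this is where I expect the only real subtlety. The key observation is that the left-hand side $T(AX,AY,Z)$ of the first identity in \eqref{tor1} is itself totally antisymmetric, a constraint not visible on the right-hand side. I would make this explicit by replacing $Z$ with $AZ$, obtaining $T(AX,AY,AZ)=-N(X,Y,AZ)+dF(X,Y,A^2Z)$, and comparing it with the same identity after interchanging $Y$ and $Z$, namely $T(AX,AZ,AY)=-N(X,Z,AY)+dF(X,Z,A^2Y)$. Since $T$ is a $3$-form, $T(AX,AY,AZ)+T(AX,AZ,AY)=0$; adding the two displayed relations therefore makes the torsion drop out and leaves precisely $N(X,Y,AZ)+N(X,Z,AY)=dF(X,Y,A^2Z)+dF(X,Z,A^2Y)$, which is \eqref{ndf1}. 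Finally the closed case is immediate: setting $dF=0$ throughout \eqref{ndf1} and \eqref{tor1} gives the stated specializations. The main obstacle is thus not any hard estimate but the disciplined bookkeeping of the cyclic and transposition symmetries of $T$, together with the recognition that \eqref{ndf1} is exactly the integrability-type identity forced by the total antisymmetry of the torsion.
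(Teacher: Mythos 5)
Your proposal is correct and follows essentially the same route as the paper: the paper likewise obtains \eqref{skct} from \eqref{gencon} and \eqref{st}, obtains \eqref{tor1} by combining \eqref{st}, \eqref{nuj2}, \eqref{conn1} and \eqref{conn2}, and then derives \eqref{ndf1} from the first identity in \eqref{tor1} exactly by the symmetrization-in-$Y,Z$ argument you describe. The only difference is that the paper leaves these as ``straightforward calculations,'' which you have carried out explicitly and correctly.
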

\begin{proof}
The equalities \eqref{tor1} follow by a straightforward
calculations from \eqref{st}, \eqref{nuj2}, \eqref{conn1} and
\eqref{conn2}. Now, the equality \eqref{ndf1} is an easy
consequence from the first equality in \eqref{tor1}. The formula
\eqref{skct} follows from \eqref{gencon} and \eqref{st}.
\end{proof}

\subsection{Eisenhart condition} Eisenhart \cite{Eis} was one of the first who proposed   a
connection with skew-symmetric torsion to be applied in general
relativity. In the sense of Eisenhart's definition (see \cite{Eis})
generalized Riemannian manifold  is a differentiable manifold with a
non-symmetric basic tensor $G(X,Y)=g(X,Y)+F(X,Y)$ with a connection
explictely defined by the equation
\begin{equation}\label{Eisen1}
g(\nabla_XY,Z)=\frac12\big[XG(Y,Z)+YG(Z,X)-ZG(Y,X)\Big].\end{equation}
It is easy to see that \eqref{Eisen1} can  be written in the form
\begin{equation*}
g(\nabla_XY,Z)=\frac12\big[XG(Y,Z)+YG(Z,X)-ZG(Y,X)\Big]=g(\nabla^g_XY,Z)+\frac12dF(X,Y,Z)\end{equation*}
which shows that the symmetric part $g$ of $G$ is covariantly
constant, $\nabla g=0$ and the torsion $T$  is totaly
skew-symmetric determined by the equation
$
T(X,Y,Z)=dF(X,Y,Z). $ 
Using relations (\ref{a2}) and
(\ref{nuj1}), we calculate that the Nijenhuis tensor satisfies
\begin{equation}\label{EisenhN1}
N(X,Y,Z)=(\nabla^g_{AX}F)(Y,Z)-(\nabla^g_{AY}F)(X,Z)+(\nabla^g_XF)(Y,AZ)-(\nabla^g_YF)(X,AZ).
\end{equation}
If, in addition, the Eisenhart connection preserves the
generalized Riemannian metric $G$, $\nabla g=\nabla F=0$, then
equation \eqref{EisenhN1} reduces to \eqref{nuj2} with $T=dF$.
\subsection{Skew-symmetric torsion. Examples} A generalized
Riemannian metric $G$ is equivalent to a choice of a
pseudo-Riemannian metric $g$ and a 2-form $F$ (an (1,1) tensor A
satisfying \eqref{m1}), such that $G=g+F$. A generalized metric
connection, i.e. a linear connection preserving $G$, is a
Riemannian connection preserving the 2-form $F$ or equivalently, a
Riemannian connection preserving the (1,1)-tensor $A$. This
supplies a number of examples.

\subsubsection{Almost Hermitian manifolds, $A^2=-1$}

Let us consider an almost Hermitian manifold $(M, g, A)$, i.e.
Riemannian manifold $(M,g)$ of dimension $n(=2m\geq4)$ endowed
with an almost complex structure,  the endomorphism $A$ satisfies
$
A^2=-I,\quad F(X,Y)=g(AX,Y), \quad g(AX,AY)=g(X,Y).
$ 
The 2-form $F$ is  the K\"ahler form (note the sign difference of
the K\"ahler form in \cite{FI}).

In this case the Nijenhuis tensor (\ref{nuj}) has the properties
\begin{equation}\label{propAHN1}
N(AX,Y,Z)=N(X,AY,Z)=N(X,Y,AZ)
\end{equation}
Then Theorem \ref{skew} gives the following well known result
\begin{cor}[\protect\cite{Str,Gau,FI}]\label{AHskew}
On an almost hermitian manifold $(M,g,A)$ there exists a unique
linear connection $\nabla$ preserving  the generalized Riemannian
metric $G=g+F$ with totally skew-symmetric torsion $T$ if and only
if the Nijenhuis tensor is totally skew-symmetric,
$N(X,Y,Z)=-N(X,Z,Y)$. The torsion T is determined by
$
T(X,Y,Z)=N(X,Y,Z)+dF(AX,AY,AZ). $ 
In particular,  an almost K\"ahler manifold, $dF=0$, admits such a
connection if and only if it is a K\"ahler manifold, $N=0$
\end{cor}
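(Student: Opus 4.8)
The plan is to specialize Theorem~\ref{skew} to the integrable-type condition $A^2=-I$ and to show that, in this setting, the necessary condition \eqref{ndf1} is equivalent to total skew-symmetry of $N$. To begin the ``only if'' direction, assume a metric connection with skew-symmetric torsion exists, so that \eqref{ndf1} holds. First I would simplify its right-hand side: since $A^2=-I$ we have $dF(X,Y,A^2Z)+dF(X,Z,A^2Y)=-dF(X,Y,Z)-dF(X,Z,Y)$, and because $dF$ is a $3$-form these two terms cancel. Thus \eqref{ndf1} collapses to $N(X,Y,AZ)+N(X,Z,AY)=0$. I would then substitute $Z\mapsto AZ$ and use \eqref{propAHN1} together with $A^2=-I$ to carry the factors of $A$ off the last two arguments; the identity becomes $N(X,Y,Z)+N(X,Z,Y)=0$. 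Since $N$ is already skew in its first two arguments, this is exactly the assertion that $N$ is totally skew-symmetric.

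Next I would extract the torsion. Starting from the first line of \eqref{tor1}, namely $T(AX,AY,Z)=-N(X,Y,Z)+dF(X,Y,AZ)$, I would replace $X$ by $AX$ and $Y$ by $AY$, use $A^2=-I$, and invoke the consequence $N(AX,AY,Z)=-N(X,Y,Z)$ of \eqref{propAHN1} to obtain $T(X,Y,Z)=N(X,Y,Z)+dF(AX,AY,AZ)$, which is the asserted formula. Since $dF(AX,AY,AZ)$ is automatically a $3$-form in $X,Y,Z$, this formula gives an alternative proof that skew-symmetry of $T$ is equivalent to skew-symmetry of $N$; and since any $G$-preserving connection is determined by its torsion through \eqref{gencon}, it also yields uniqueness once existence is established.

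For the ``if'' direction I would assume $N$ totally skew-symmetric, \emph{define} $T:=N+dF(A\cdot,A\cdot,A\cdot)$ (which is then a $3$-form), and verify that this $T$ satisfies the compatibility condition \eqref{conn2}; the converse part of Theorem~\ref{main} then produces the desired unique $G$-preserving connection with skew-symmetric torsion $T$, given by \eqref{gencon}. Checking \eqref{conn2} is where the real work sits: it amounts to confirming the universal almost-Hermitian identity $N(AX,Y,Z)=2(\nabla^g_XF)(Y,Z)-dF(X,Y,Z)+dF(X,AY,AZ)$ — equivalently, that the two expressions for $T$ furnished by the two lines of \eqref{tor1} coincide. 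I expect this reconciliation to be the main obstacle, rather than anything conceptual.

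Finally, for the almost K\"ahler specialization I would set $dF=0$, so that $T=N$ and the second line of \eqref{tor1} reads $N(AX,Y,Z)=2(\nabla^g_XF)(Y,Z)$, equivalently $(\nabla^g_XF)(Y,Z)=\tfrac12 N(AX,Y,Z)$. Substituting these expressions into the universal identity \eqref{EisenhN1} and simplifying each term with \eqref{propAHN1} and $A^2=-I$ collapses it to $N=-2N$, hence $N=0$. With $N=0$ and $dF=0$ the Newlander--Nirenberg theorem gives integrability, so the structure is K\"ahler, and the torsion $T=N=0$ shows the connection is the Levi-Civita connection.
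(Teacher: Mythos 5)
Your ``only if'' direction, torsion formula, uniqueness argument, and almost K\"ahler conclusion are all correct, and they follow the same route the paper intends: the paper's entire proof is the remark that \eqref{propAHN1} holds and that ``Theorem~\ref{skew} gives the following well known result'', with the substance deferred to \cite{Str,Gau,FI}. Your collapse of \eqref{ndf1} (the $dF$ terms cancel because $dF$ is a $3$-form and $A^2=-I$), your extraction of $T(X,Y,Z)=N(X,Y,Z)+dF(AX,AY,AZ)$ from the first line of \eqref{tor1} via $N(AX,AY,Z)=-N(X,Y,Z)$, and your derivation of $N=-2N$ from \eqref{EisenhN1} in the almost K\"ahler case are all sound; the last of these is a complete argument that the paper never spells out.

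The gap is the ``if'' direction, and you have flagged it yourself: you define $T:=N+dF(A\cdot,A\cdot,A\cdot)$ and correctly reduce existence to verifying \eqref{conn2} (so that the converse part of Theorem~\ref{main} applies), but you never perform that verification, calling it ``the main obstacle''. Until it is done you have proved only necessity, which is all that Theorem~\ref{skew} asserts, not the equivalence claimed by the corollary. Moreover, the identity you propose to confirm, $N(AX,Y,Z)=2(\nabla^g_XF)(Y,Z)-dF(X,Y,Z)+dF(X,AY,AZ)$, is \emph{not} a universal almost-Hermitian identity: the universal one (see \cite{Gau}, \cite{KN}, or what follows from \eqref{EisenhN1}) reads $2(\nabla^g_XF)(Y,Z)=dF(X,Y,Z)-dF(X,AY,AZ)+N(Y,Z,AX)$, with $N(Y,Z,AX)$ rather than $N(AX,Y,Z)$; the two coincide precisely when $N$ is invariant under cyclic permutations, e.g.\ when $N$ is a $3$-form. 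So the verification must invoke the hypothesis that $N$ is totally skew, and it can be closed as follows: for totally skew $T$, condition \eqref{conn2} collapses to $(\nabla^g_XF)(Y,Z)=-\frac12\big[T(X,Y,AZ)+T(X,AY,Z)\big]$; substituting your $T$ and using \eqref{propAHN1}, this becomes the single relation $3N(X,Y,Z)=dF(AX,Y,Z)+dF(X,AY,Z)+dF(X,Y,AZ)-dF(AX,AY,AZ)$, which in turn follows by taking the cyclic sum of the universal identity and using the skewness of $N$ together with \eqref{propAHN1}. With that computation supplied your proof is complete; note the paper itself also omits it, attributing the result to \cite{Str,Gau,FI}.
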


\subsubsection{Almost para-hermitian manifolds, $A^2=1$} Almost
para-Hermitian manifold $(M, g, A)$ is Riemannian manifold $(M,g)$
endowed with endomorphism $A$ satisfying
$
A^2=I,\quad F(X,Y)=g(AX,Y), \quad g(AX,AY)=-g(X,Y).
$ 
Such a manifold is  of even dimension $2n$, the
eigen-subbundles of the  paracomplex structure $A$ are of equal
dimension $n$ and the metric $g$ is of neutral signature (n,n). In
this case the Nijenhuis tensor (\ref{nuj}) has the properties
\eqref{propAHN1} and Theorem \ref{skew} gives the following well
known result (note the sign difference of the 2-form $F$ in
\cite{IZ})
\begin{cor}[\protect\cite{IZ}]\label{APHskew}
On an almost para-hermitian manifold $(M,g,A)$ there exists a
unique linear connection $\nabla$ preserving  the generalized
Riemannian metric $G=g+F$ with totally skew-symmetric torsion $T$
if and only if the Nijenhuis tensor is totally skew-symmetric,
$N(X,Y,Z)=-N(X,Z,Y)$. The torsion T is determined by
$
T(X,Y,Z)=-N(X,Y,Z)+dF(AX,AY,AZ). $ 
In particular, an almost para-K\"ahler manifold, $dF=0$, admits
such a connection if and only if it is a para-K\"ahler manifold,
$N=0$
\end{cor}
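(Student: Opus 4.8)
The plan is to deduce this from Theorem~\ref{skew} together with the algebraic identity \eqref{propAHN1} and the relations $A^2=I$, $g(AX,AY)=-g(X,Y)$, mirroring the almost Hermitian Corollary~\ref{AHskew} exactly; the only change is the sign produced by $A^2=+I$ rather than $A^2=-I$.

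For necessity I would assume a $G$-preserving connection $\nabla$ with totally skew-symmetric torsion exists, so that \eqref{ndf1} holds. Since $A^2=I$, the right-hand side of \eqref{ndf1} becomes $dF(X,Y,Z)+dF(X,Z,Y)$, which vanishes because $dF$ is a $3$-form; hence $N(X,Y,AZ)+N(X,Z,AY)=0$. Replacing $Z$ by $AZ$ and using \eqref{propAHN1} to move the resulting factors of $A$ between slots (together with $A^2=I$) turns this into $N(X,Y,Z)+N(X,Z,Y)=0$, which with the built-in skew-symmetry $N(X,Y,Z)=-N(Y,X,Z)$ shows $N$ is totally skew-symmetric. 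The torsion formula is then read off from the first identity in \eqref{tor1}: substituting $X\mapsto AX$, $Y\mapsto AY$ and using $A^2=I$ with \eqref{propAHN1} (so $N(AX,AY,Z)=N(X,Y,Z)$) gives $T(X,Y,Z)=-N(X,Y,Z)+dF(AX,AY,AZ)$.

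For sufficiency I would define $T$ by this same formula. It is totally skew-symmetric because $N$ is (by hypothesis) and $dF(AX,AY,AZ)$ is (a $3$-form precomposed with $A$ in every argument). It then remains to verify the compatibility condition \eqref{conn2}, after which the converse part of Theorem~\ref{main}(b) produces the unique $G$-preserving connection with torsion $T$ via \eqref{gencon}/\eqref{skct}. Using total skew-symmetry of $T$, the six terms on the right of \eqref{conn2} collapse (two cancelling pairs die) to $-\tfrac12\big[T(X,Y,AZ)+T(X,AY,Z)\big]$, and inserting the formula for $T$ and applying \eqref{propAHN1} reduces \eqref{conn2} to the single identity
\[(\nabla^g_XF)(Y,Z)=N(X,AY,Z)-\tfrac12\big[dF(AX,AY,Z)+dF(AX,Y,AZ)\big].\]
I expect this to be the main obstacle: it is proved by expanding $N(X,AY,Z)$ through the general Nijenhuis identity \eqref{EisenhN1} and rewriting $dF$ via the cyclic-sum formula for $\nabla^g F$ (the case $T=0$ of \eqref{cconn1}), then repeatedly using \eqref{propAHN1}, $A^2=I$ and the total skew-symmetry of $N$ to cancel terms. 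It is precisely here that the hypothesis ``$N$ totally skew-symmetric'' is consumed.

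Finally, for the almost para-K\"ahler case $dF=0$ the torsion reduces to $T=-N$, and the second identity in \eqref{tor1} gives $2(\nabla^g_XF)(Y,Z)=T(AX,Y,Z)=-N(AX,Y,Z)$. Feeding this into the closedness relation $(\nabla^g_XF)(Y,Z)+(\nabla^g_YF)(Z,X)+(\nabla^g_ZF)(X,Y)=0$ yields $N(AX,Y,Z)+N(AY,Z,X)+N(AZ,X,Y)=0$. By \eqref{propAHN1} the tensor $(X,Y,Z)\mapsto N(AX,Y,Z)$ is again totally skew-symmetric once $N$ is, so its cyclic sum equals three times itself; hence $N(AX,Y,Z)=0$, and since $A$ is invertible $N=0$, i.e. the manifold is para-K\"ahler. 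Conversely, $N=0$ with $dF=0$ forces $T=0$, so $\nabla=\nabla^g$ is the desired connection.
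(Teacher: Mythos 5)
Your proposal is correct and takes essentially the same route as the paper, which likewise obtains this corollary by specializing Theorem~\ref{skew} (for necessity and the torsion formula, using \eqref{propAHN1} and $A^2=I$) together with the converse part of Theorem~\ref{main} (for existence). The identity you isolate in the sufficiency step does hold by exactly the means you describe: combined with \eqref{EisenhN1} and the cyclic-sum expression of $dF$ via $\nabla^g F$, it is equivalent (after substituting $Z\mapsto AZ$ and using $A^2=I$) to $N(X,Y,Z)+N(X,Z,Y)=0$, so your outline closes; the paper itself supplies none of these details, simply invoking \eqref{propAHN1}, Theorem~\ref{skew} and the citation \cite{IZ}.
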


\subsubsection{Almost contact metric structures} We consider an
almost contact metric manifold $(M^{2n+1}, g, A, \eta,\xi)$, i.e.,
a $(2n+1)$ -dimensional Riemannian manifold equipped with a 1-form
$\eta$, a (1,1)-tensor $A$ and a vector field $\xi$ dual to $\eta$
with respect to the metric $g$, $\eta(\xi)=1, \eta(X)=g(X,\xi)$
such that the following compatibility conditions are satisfied
(see e.g. \cite{Blair})
\begin{equation}\label{acon}
A^2=-id+\eta\otimes\xi, \quad g(AX,AY)=g(X,Y)-\eta(X)\eta(Y), \quad
F(X,Y)=g(AX,Y), \quad A\xi=0.
\end{equation}
In this case the skew-symmetric part $F$ of $G=g+F$ is degenerate,
$F(\xi,X)=0$ and has rank $2n$. Then Theorem~\ref{main} together
with \eqref{acon} implies $
0=g((\nabla_XA)AY,\xi)=(\nabla_X\eta)Y=g(\nabla_X\xi,Y), \quad
i.e. \quad \nabla\eta=\nabla\xi=0. $ 
 Now
\eqref{skct} yields $0=g(\nabla^g_X\xi,Y)+\frac12T(X,\xi,Y)$ which
shows that $\xi$ is a Killing vector field because $T$ is
skew-symmetric and
\begin{equation}\label{aconeta}
d\eta=\xi\lrcorner T,\qquad \xi\lrcorner d\eta=0,
\end{equation}
where $\lrcorner$ is the interior multiplication.

We obtain from \eqref{nuj2} applying \eqref{acon} and
\eqref{aconeta} that
\begin{equation}\label{nujac}
N(X,Y,Z)+\eta(Z)d\eta(X,Y)=-T(AX,AY,Z)+T(X,Y,Z)-T(AX,Y,AZ)-T(X,AY,AZ)
\end{equation}
The right hand side of equation \eqref{nujac} is totally
skew-symmetric which yields that the tensor
\begin{equation}\label{nujac0}N^{ac}=N+d\eta\otimes \eta
\end{equation}
 is totally skew-symmetric, $N^{ac}(X,Y,Z)=-N^{ac}(X,Z,Y)$.
 In fact, the tensor $N^{ac}$ is usually called \emph{the Nijenhuis tensor in almost contact geometry} (see e.g. \cite{Blair}).

We get from \eqref{nujac} applying \eqref{acon} and \eqref{aconeta}
that
\begin{equation}\label{nujac1}
N(\xi,Y,Z)=d\eta(Y,Z)-d\eta(AY,AZ).
\end{equation}
We calculate from  \eqref{nuj} taking into account \eqref{acon}
and \eqref{aconeta} that
\begin{multline}\label{acs}
N(AX,AY,Z)=-N(X,Y,Z)\\-\eta(Z)\Big[d\eta(AX,AY)+d\eta(X,Y)\Big]+\eta(X)N(\xi,Y,Z)-\eta(Y)N(\xi,X,Z)
\end{multline}
The first formula in \eqref{tor1} together with \eqref{aconeta} and
\eqref{acs} yields
\begin{multline*}
T(X,Y,Z)=-N(AX,AY,Z)+dF(AX,AY,AZ)+\eta(X)d\eta(Y,Z)+\eta(Y)d\eta(Z,X)\\=N(X,Y,Z)+dF(AX,AY,AZ)
+\eta(Z)\Big[d\eta(AX,AY)+d\eta(X,Y)\Big]-\eta(X)N(\xi,Y,Z)+\eta(Y)N(\xi,X,Z)\\
+\eta(X)d\eta(Y,Z)+\eta(Y)d\eta(Z,X).
\end{multline*}
 We obtain from this equality applying \eqref{nujac1} the following formula for the skew-symmetric torsion
\begin{multline}\label{tac}
T(X,Y,Z)=N(X,Y,Z)+\eta(Z)d\eta(X,Y)+dF(AX,AY,AZ)\\+\eta(Z)d\eta(AX,AY)+\eta(Y)d\eta(AZ,AX)
+\eta(X)d\eta(AY,AZ).
\end{multline}
Observe using \eqref{nuj}, \eqref{acon} and \eqref{nujac0} that
$d\eta(AX,AY)=-N(X,Y,\xi)=-N^{ac}(X,Y,\xi)+d\eta(X,Y).$ Substitute
the last equality into \eqref{tac} and use the skew-symmetricity
of $N^{ac}$ to get the
 equality obtained in \cite{FI}
\begin{equation}\label{tac1}
T(X,Y,Z)=\eta\wedge d\eta+N^{ac}-d^AF-\eta(Z)\wedge(\xi\lrcorner
N^{ac}), \quad d^AF(X,Y,Z)=-dF(AX,AY,AZ).
\end{equation}
Theorem~\ref{skew} gives the next well known result (note the sign
difference of the 2-form $F$ in \cite{FI})
\begin{cor}[\protect\cite{FI}]\label{AcHskew}
On an almost contact metric manifold manifold $(M,g,A,F,\eta,\xi)$
there exists a unique linear connection $\nabla$ preserving  the
generalized Riemannian metric $G=g+F$ with totally skew-symmetric
torsion $T$ if and only if the almost contact Nijenhuis tensor
$N^{ac}$ is totally skew-symmetric,  $N^{ac}(X,Y,Z)=-N^{ac}(X,Z,Y)$
and $\xi$ is a Killing vector field. The torsion T is determined by
\eqref{tac1} or, equivalently by \eqref{tac}.
\end{cor}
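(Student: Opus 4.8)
The plan is to deduce the corollary directly from Theorem~\ref{skew}, reading off both the necessity of the two stated conditions and the sufficiency from the almost contact identities \eqref{acon}--\eqref{tac1} already assembled above.

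For the necessity, I would suppose a linear connection $\nabla$ satisfies $\nabla G=0$ and has totally skew-symmetric torsion $T$. Theorem~\ref{main}(a) gives $\nabla g=\nabla F=\nabla A=0$, and the computation preceding \eqref{aconeta} then forces $\nabla\eta=\nabla\xi=0$. Feeding $\nabla\xi=0$ into \eqref{skct} yields $g(\nabla^g_X\xi,Y)=-\frac12 T(X,\xi,Y)$; since $T$ is totally skew-symmetric the right-hand side is skew in $X,Y$, so $\xi$ is Killing and $d\eta=\xi\lrcorner T$. With these facts \eqref{nuj2} collapses to \eqref{nujac}, whose right-hand side is manifestly totally skew-symmetric, whence $N^{ac}=N+d\eta\otimes\eta$ is totally skew-symmetric. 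This shows both conditions are necessary, and the torsion is then computed exactly as in the passage leading from the first identity of \eqref{tor1}, through \eqref{acs} and \eqref{nujac1}, to \eqref{tac} and \eqref{tac1}.

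For the sufficiency, I would assume $N^{ac}$ is totally skew-symmetric and $\xi$ is Killing, and define $T$ by \eqref{tac1}. I would first check that $T$ is totally skew-symmetric: the four summands $\eta\wedge d\eta$, $N^{ac}$, $d^AF$ and $\eta\wedge(\xi\lrcorner N^{ac})$ are each honest $3$-forms once $N^{ac}$ is skew (recall $d^AF(X,Y,Z)=-dF(AX,AY,AZ)$ is skew because $dF$ is a $3$-form, and $\xi\lrcorner N^{ac}$ is a $2$-form), so their combination is totally skew-symmetric. I would then verify that this $T$ satisfies the compatibility relation \eqref{conn2}, equivalently the two identities \eqref{tor1}; concretely this is the necessity computation run in reverse, re-expressing \eqref{tac1} as \eqref{tac} via the skew-symmetry of $N^{ac}$ and then using the algebraic identities $d\eta(AX,AY)=-N(X,Y,\xi)$, \eqref{nujac1} and \eqref{acs}, which hold on any almost contact metric manifold with Killing $\xi$. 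Once \eqref{conn2} is in place, the converse part of Theorem~\ref{main}(b) produces a connection $\nabla$ with $\nabla G=0$ and this torsion, given by \eqref{skct}; uniqueness is immediate, since by Theorem~\ref{main} a $G$-preserving connection is determined by its torsion and that torsion is pinned down by \eqref{tor1}.

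The main obstacle is the sufficiency, and within it the verification that the explicit candidate \eqref{tac1} really fulfills \eqref{conn2}/\eqref{tor1}. The delicate point is that reversing the derivation of \eqref{tac} requires the auxiliary identities \eqref{nujac1}, \eqref{acs} and $d\eta(AX,AY)=-N(X,Y,\xi)$ to be available purely from the structure equations \eqref{acon} and the Killing condition on $\xi$, independently of assuming a metric connection; keeping track of the several $\eta(\cdot)\,d\eta(\cdot,\cdot)$ correction terms so that they reassemble correctly, without spurious symmetric contributions, is where the calculation is most error-prone.
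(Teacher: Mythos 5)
Your argument follows the paper's own derivation essentially verbatim: the necessity direction reproduces the chain \eqref{aconeta}, \eqref{nujac}, the skew-symmetry of $N^{ac}$, and the torsion formula \eqref{tac}/\eqref{tac1} obtained via \eqref{tor1}, \eqref{acs} and \eqref{nujac1}, exactly as in the text preceding the corollary, with Theorem~\ref{main}(a) and \eqref{skct} supplying $\nabla\xi=0$ and the Killing property. Your sufficiency outline---checking that the candidate torsion \eqref{tac1} is a $3$-form, verifying \eqref{conn2}, and invoking the converse part of Theorem~\ref{main}(b), with uniqueness because a $G$-preserving connection is determined by its torsion---is precisely the step the paper leaves implicit by citing \cite{FI}, so this is a correct proof along the same route rather than a genuinely different one.
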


\subsubsection{Almost paracontact metric structures} We consider an
almost paracontact metric manifold $(M^{2n+1}, g, A, \eta,\xi)$,
i.e., a $(2n+1)$ -dimensional pseudo Riemannian manifold of
signature (n+1,n) equipped with a 1-form $\eta$, a (1,1)-tensor
$A$ and a vector field $\xi$ dual to $\eta$ with respect to the
metric $g$, $\eta(\xi)=1, \eta(X)=g(X,\xi)$ such that the
following compatibility conditions hold (see e.g. \cite{Zam})
\begin{equation}\label{apcon}
A^2=id-\eta\otimes\xi, \quad g(AX,AY)=-g(X,Y)+\eta(X)\eta(Y), \quad
F(X,Y)=g(AX,Y), \quad A\xi=0.
\end{equation}
In this case the skew-symmetric part $F$ of $G=g+F$ is degenerate,
$F(\xi,X)=0$ and has rank $2n$. Theorem~\ref{main} together with
\eqref{apcon} implies $
0=g((\nabla_XA)AY,\xi)=-(\nabla_X\eta)Y=-g(\nabla_X\xi,Y), \quad
i.e. \quad \nabla\eta=\nabla\xi=0. $ 
 Now \eqref{skct} yields $0=g(\nabla^g_X\xi,Y)+\frac12T(X,\xi,Y)$
which shows that $\xi$ is a Killing vector field because $T$ is
skew-symmetric and $
d\eta=\xi\lrcorner T,\qquad \xi\lrcorner d\eta=0. $ 
We obtain from \eqref{nuj2} applying \eqref{apcon} and the last
equality 
\begin{equation}\label{nujapc}
N(X,Y,Z)-\eta(Z)d\eta(X,Y)=-T(AX,AY,Z)-T(X,Y,Z)-T(AX,Y,AZ)-T(X,AY,AZ)
\end{equation}
The right hand side of equation \eqref{nujapc} is totally
skew-symmetric which yields that the tensor
$
N^{apc}=N-d\eta\otimes \eta $ 
 is totally skew-symmetric, $N^{apc}(X,Y,Z)=-N^{apc}(X,Z,Y)$.
 In fact, the tensor $N^{apc}$ is usually called \emph{the Nijenhuis tensor in almost paracontact geometry} (see e.g. \cite{Zam}).

As in the previous case,  Theorem~\ref{skew} gives the following
well known result (note the sign difference of the 2-form $F$ in
\cite{Zam})
\begin{cor}[\protect\cite{Zam}]\label{AcHskewp}
On an almost paracontact metric manifold manifold
$(M,g,A,F,\eta,\xi)$ there exists a unique linear connection
$\nabla$ preserving  the generalized Riemannian metric $G=g+F$
with totally skew-symmetric torsion $T$ if and only if the almost
paracontact Nijenhuis tensor $N^{apc}$ is totally skew-symmetric,
$N^{apc}(X,Y,Z)=-N^{apc}(X,Z,Y)$ and $\xi$ is a Killing vector
field. The torsion T is determined by
\begin{equation*}
T(X,Y,Z)=\eta\wedge d\eta-N^{apc}-d^AF+\eta(Z)\wedge(\xi\lrcorner
N^{ac}), \quad d^AF(X,Y,Z)=-dF(AX,AY,AZ).
\end{equation*}
\end{cor}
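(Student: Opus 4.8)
The plan is to reproduce, with the paracontact data \eqref{apcon} in place of the almost contact data \eqref{acon}, the argument that was carried out for almost contact metric structures, paying close attention to the sign changes forced by $A^2=id-\eta\otimes\xi$ and by the neutral-signature relation $g(AX,AY)=-g(X,Y)+\eta(X)\eta(Y)$. Existence and uniqueness are entirely governed by Theorem~\ref{skew} together with Theorem~\ref{main}, so the corollary splits into two tasks: first, showing that the compatibility condition \eqref{ndf1} is equivalent to the two stated hypotheses, namely total skew-symmetry of $N^{apc}$ together with $\xi$ being Killing; and second, extracting the explicit torsion from the equations \eqref{tor1}.

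For the necessity direction I would use exactly the structural consequences recorded in the paragraph preceding the statement. Applying Theorem~\ref{main} to \eqref{apcon} gives $\nabla\eta=\nabla\xi=0$, and then \eqref{skct} forces $\xi$ to be Killing together with $d\eta=\xi\lrcorner T$ and $\xi\lrcorner d\eta=0$. Feeding \eqref{apcon} into \eqref{nuj2} produces \eqref{nujapc}, whose right-hand side is manifestly totally skew-symmetric; this is precisely what makes $N^{apc}=N-d\eta\otimes\eta$ totally skew-symmetric. Hence both hypotheses are necessary. For sufficiency I would reverse this: assuming $N^{apc}$ skew and $\xi$ Killing, I would define $T$ by the claimed formula, verify that it is totally skew-symmetric, and check that it satisfies \eqref{conn2}; Theorem~\ref{main}(b) then delivers the unique metric connection with that torsion, and \eqref{skct} is its Koszul-type expression.

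The explicit torsion formula I would obtain by mirroring the derivation of \eqref{tac}--\eqref{tac1}. Two intermediate identities are needed: the paracontact analogue of \eqref{nujac1}, gotten by evaluating \eqref{nujapc} at the $\xi$-direction and using $A\xi=0$, and the paracontact analogue of \eqref{acs}, read off directly from \eqref{nuj} with \eqref{apcon}. Substituting these into the first equation of \eqref{tor1}, rewritten as $T(A^2X,A^2Y,Z)=-N(AX,AY,Z)+dF(AX,AY,AZ)$, and expanding $A^2X=X-\eta(X)\xi$, $A^2Y=Y-\eta(Y)\xi$ through the multilinearity of $T$ together with $T(\xi,\cdot,\cdot)=d\eta$, yields $T$ in terms of $N$, $d\eta$ and $dF(A\,\cdot,A\,\cdot,A\,\cdot)$. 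Finally I would repackage using $N=N^{apc}+d\eta\otimes\eta$, the observation $d\eta(AX,AY)=-N(X,Y,\xi)=-N^{apc}(X,Y,\xi)-d\eta(X,Y)$ analogous to the contact case, and the abbreviation $d^AF(X,Y,Z)=-dF(AX,AY,AZ)$, to reach the stated closed form; the sign changes relative to \eqref{tac1} are exactly those responsible for the $-N^{apc}$ term and the reversed sign on the $\eta(Z)\wedge(\xi\lrcorner N)$ term.

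The routine part is the tensor bookkeeping. The only genuine obstacle I anticipate is keeping the signs consistent, since the paracontact identities differ from their contact counterparts through several \emph{independent} sign flips -- in $A^2$, in $g(A\,\cdot,A\,\cdot)$, and in the very definition $N^{apc}=N-d\eta\otimes\eta$ -- and these must conspire correctly to convert the almost contact torsion \eqref{tac1} into its paracontact analogue. As sanity checks I would restrict the final expression to $\xi$-directions, where it must reproduce $d\eta=\xi\lrcorner T$, and I would confirm total skew-symmetry of the right-hand side, which is guaranteed a posteriori since $T$ is skew by construction.
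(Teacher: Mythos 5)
Your proposal is correct and follows essentially the same route as the paper, which itself disposes of the paracontact case by remarking ``as in the previous case'': necessity via Theorem~\ref{main}, \eqref{skct}, and the identity \eqref{nujapc} (whose skew right-hand side forces $N^{apc}=N-d\eta\otimes\eta$ to be skew and $\xi$ to be Killing), sufficiency via Theorem~\ref{main}(b), and the torsion formula by mirroring the derivation of \eqref{tac}--\eqref{tac1} with the paracontact sign flips. Your intermediate identities (in particular $d\eta(AX,AY)=-N(X,Y,\xi)=-N^{apc}(X,Y,\xi)-d\eta(X,Y)$ and the expansion of $T(A^2X,A^2Y,Z)$ using $d\eta=\xi\lrcorner T$) are exactly the paracontact analogues the paper's terse argument presupposes.
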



\section{Einstein metricity condition (NGT)}\label{ngt}
In his attempt to construct an unified field theory (Non-symmetric
Gravitational Theory, briefly NGT) A.Einstein \cite{Ein}
considered a generalized Riemannian manifold and use the so called
metricity condition \eqref{metein} which can be written as follows
$
XG(Y,Z)-G(\nabla_YX,Z)-G(Y,\nabla_XZ)=0, $ 
see also \cite{Mof,LMof} for a little bit more general right hand
side of \eqref{metein}.

In view of \eqref{covG},  the definition of the torsion,
\eqref{metric} and \eqref{m1}  the metricity condition
\eqref{metein} can be written in the form
\begin{equation}\label{metein1}
(\nabla_XG)(Y,Z)=-G(T(X,Y),Z) \quad \Leftrightarrow \quad
(\nabla_X(g+F))(Y,Z)=-T(X,Y,Z)+T(X,Y,AZ).
\end{equation}
A general solution for the connection $\nabla$ satisfying
\eqref{metein} is given in  terms of   $g,F,T$\cite{Hlav} (see also
\cite{Mof}). Here we show that a solution can be expressed in terms
of the exterior derivative $dF$ of $F$ and find formulas for the
covariant derivatives of $\nabla g$ and $\nabla F$.

Taking the cyclic sum in \eqref{metein1} and applying \eqref{cconn1}
, we obtain
\begin{multline}\label{ein2}
(\nabla_Xg)(Y,Z)+(\nabla_Yg)(Z,X)+(\nabla_Zg)(X,Y)\\=-dF(X,Y,Z)-T(X,Y,Z)-T(Y,Z,X)-T(Z,X,Y).
\end{multline}
It is easy to observe that the left hand side of \eqref{ein2} is
symmetric while the right hand side  is skew-symmetric. Hence, we
get
\begin{equation}\label{ein3}
\begin{split}
(\nabla_Xg)(Y,Z)+(\nabla_Yg)(Z,X)+(\nabla_Zg)(X,Y)=0;\\
dF(X,Y,Z)=-T(X,Y,Z)-T(Y,Z,X)-T(Z,X,Y).
\end{split}
\end{equation}
The symmetric part of \eqref{metein1} with respect to $X,Y$ gives
$$(\nabla_Xg)(Y,Z)+(\nabla_Yg)(X,Z)+(\nabla_XF)(Y,Z)+(\nabla_YF)(X,Z)=0,$$
which combined with the first equality in \eqref{ein3} yields
\begin{equation}\label{ein4}
(\nabla_Zg)(X,Y)=(\nabla_XF)(Y,Z)+(\nabla_YF)(X,Z).
\end{equation}
Substitute \eqref{ein4} into \eqref{metein1} and use \eqref{cconn1}
to get
\begin{equation}\label{ein5}
(\nabla_ZF)(X,Y)=\frac12\Big[dF(X,Y,Z)+T(X,Y,Z)-T(Z,Y,AX)+T(Z,X,AY)\Big].
\end{equation}
We obtain inserting \eqref{ein5} into \eqref{ein4} that
\begin{equation}\label{ein6}
(\nabla_Xg)(Y,Z)=-\frac12\Big[T(X,Y,Z)-T(X,Y,AZ)+T(X,Z,Y)-T(X,Z,AY)\Big].
\end{equation}
Applying  \eqref{ein6} and the second equation in \eqref{ein3} we
obtain from \eqref{cgencon} that
\begin{multline}\label{genconein}
g(\nabla_XY,Z)=g(\nabla^g_XY,Z)+\frac12\Big[T(X,Y,Z)-T(X,Z,AY)-T(Y,Z,AX)\Big]\\
=g(\nabla^g_XY,Z)-\frac12\Big[dF(X,Y,Z)+T(Z,X,Y)+T(Y,Z,X)\Big]+
\frac12\Big[T(Z,X,AY)+T(Z,Y,AX)\Big].
\end{multline}
\subsection{NGT involving the Nijenhuis tensor}
We are going to involve the Nijenhuis tensor in our attemp to
determine the torsion $T$. We start with an expression of the
covariant derivative $\nabla A$. Substitute \eqref{ein5} and
\eqref{ein6} into \eqref{a2}, we find
\begin{multline}\label{ein7}
g((\nabla_XA)Y,Z)=(\nabla_XF)(Y,Z)-(\nabla_Xg)(AY,Z)\\=
\frac12\Big[dF(X,Y,Z)+T(Y,Z,X)+T(X,Y,AZ)+T(X,AY,Z)-T(X,AY,AZ)-T(X,Z,A^2Y)\Big]
\end{multline}

Substitute \eqref{ein7} into \eqref{nuj1} to get after some
calculations the following equality

\begin{multline}\label{ein8}
N(X,Y,Z)=dF(X,Y,AZ)+\frac12\big[dF(AX,Y,Z)+dF(X,AY,Z)\big]\\+
\frac12\Big[T(Y,Z,AX)-T(X,Z,AY)+T(Y,AZ,X)-T(X,AZ,Y)-T(X,AY,A^2Z)-T(AX,Y,A^2Z)\Big]\\
-\frac12\Big[T(Z,AY,A^2X)-T(Z,AX,A^2Y)+T(AZ,Y,A^2X)-T(AZ,X,A^2Y)\Big]-T(AX,AY,AZ).
\end{multline}

\subsection{NGT and skew-symmetric torsion}

Now we consider the case of totally skew-symmetric torsion,
$T(X,Y,Z)=-T(X,Z,Y)$. From the considerations above, we have

\begin{thrm}\label{pp1}
A generalized Riemannian manifold $(M,G=g+F)$ admits a linear
connection satisfying the Einstein metricity condition
\eqref{metein} with totally skew-symmetric torsion $T$ if and only
if the Nijenhuis tensor $N$ and the exterior derivative of $F$
satisfy the relation
\begin{multline}\label{skew1}
N(X,Y,Z)=\frac23dF(X,Y,AZ)+\frac13dF(AX,Y,Z)+\frac13dF(X,AY,Z)+\frac13dF(AX,AY,AZ)\\
-\frac16\Big[dF(A^2X,Y,AZ)+dF(A^2X,AY,Z)+dF(X,A^2Y,AZ)-dF(X,AY,A^2Z)\Big]\\-\frac16\Big[dF(AX,A^2Y,Z)-dF(AX,Y,A^2Z)\Big]
\end{multline}
In this case the skew-symmetric torsion is completely determined
by the exterior derivative of the skew-symmetric part of the
generalized Riemannian metric,
\begin{equation}\label{tordfnew}
T(X,Y,Z)=-\frac13dF(X,Y,Z), \end{equation} the Einstein metricity
condition has the form
$(\nabla_XG)(Y,Z)=\frac13\Big[dF(X,Y,Z)-dF(X,Y,AZ)\Big]$ and it is
equivalent to the following two conditions
\begin{equation}\label{skew0}
\begin{split}
(\nabla_Xg)(Y,Z)=-\frac16\Big[dF(X,Y,AZ)-dF(X,AY,Z)\Big];\\
(\nabla_XF)(Y,Z)=\frac16\Big[2dF(X,Y,Z)-dF(X,Y,AZ)-dF(X,AY,Z)\Big].
\end{split}
\end{equation}
The connection is unique determined by the formula
\begin{equation}\label{newnbl}
g(\nabla_XY,Z)=g(\nabla^g_XY,Z)-\frac16dF(X,Y,Z)-\frac16dF(X,AY,Z)+\frac16dF(AX,Y,Z).
\end{equation}
The covariant derivative of $F$ and $A$ with respect to the
Levi-Civita connection $\nabla^g$ are given by
 \begin{multline}\label{ff2}
(\nabla^g_XF)(Y,Z) =g((\nabla^g_XA)Y,Z)\\= \frac{1}{3}
dF(X,Y,Z)+\frac{1}{3}
dF(X,AY,AZ)-\frac16dF(AX,Y,AZ)-\frac16dF(AX,AY,Z).
\end{multline}
\end{thrm}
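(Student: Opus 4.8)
The plan is to specialize the general NGT identities established above, namely \eqref{ein3}, \eqref{ein5}, \eqref{ein6}, \eqref{ein8} and \eqref{genconein} --- all derived from the metricity condition \eqref{metein1} \emph{without} any assumption on the torsion --- to the totally skew-symmetric case \eqref{st}. The decisive first step is the second identity in \eqref{ein3}. Since a totally skew-symmetric $T$ is invariant under cyclic permutations, $T(X,Y,Z)=T(Y,Z,X)=T(Z,X,Y)$, its right-hand side collapses to $-3T(X,Y,Z)$, which forces $T(X,Y,Z)=-\tfrac13 dF(X,Y,Z)$, i.e. \eqref{tordfnew}. Thus the torsion is completely pinned down by $dF$ and there is exactly one candidate connection to analyse.

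With $T=-\tfrac13 dF$ in hand I would substitute into \eqref{ein6} and \eqref{ein5} and simplify, using only that $dF$ is totally antisymmetric, to read off the two equalities of \eqref{skew0} for $\nabla g$ and $\nabla F$. Adding them and comparing with \eqref{metein1} (note $-T(X,Y,Z)+T(X,Y,AZ)=\tfrac13 dF(X,Y,Z)-\tfrac13 dF(X,Y,AZ)$) gives the stated form of the metricity condition. Feeding $T=-\tfrac13 dF$ into \eqref{genconein} and rewriting the auxiliary terms via the antisymmetry of $dF$ (for instance $dF(X,Z,AY)=-dF(X,AY,Z)$ and $dF(Y,Z,AX)=dF(AX,Y,Z)$) produces \eqref{newnbl}. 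A direct check that the connection \eqref{newnbl} has torsion $-\tfrac13 dF$ confirms consistency.

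The necessary relation \eqref{skew1} then follows by inserting $T=-\tfrac13 dF$ into the Nijenhuis identity \eqref{ein8} and collecting terms, again exploiting the total antisymmetry of $dF$ and the $g$-skew-symmetry of $A$. I expect this to be the heaviest piece of bookkeeping. A cleaner cross-check, which I would carry out in parallel, is to first establish \eqref{ff2}: comparing $\nabla$ with $\nabla^g$ via $(\nabla_XF)(Y,Z)-(\nabla^g_XF)(Y,Z)=-F(\nabla_XY-\nabla^g_XY,Z)-F(Y,\nabla_XZ-\nabla^g_XZ)$, substituting the difference from \eqref{newnbl}, and using $F(W,\cdot)=g(AW,\cdot)$ together with $g(AU,V)=-g(U,AV)$, then equating with $\nabla F$ from \eqref{skew0} and solving for $\nabla^g F$; the identity $\nabla^g F=g(\nabla^g A\,\cdot,\cdot)$ comes from \eqref{a2} with $\nabla^g g=0$. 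Plugging \eqref{ff2} into the intrinsic expression \eqref{EisenhN1} of $N$ in terms of $\nabla^g F$ recovers \eqref{skew1}, which shows that \eqref{skew1} and \eqref{ff2} are two forms of the same constraint.

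For the converse, given \eqref{skew1} I would set $T:=-\tfrac13 dF$, which is totally skew-symmetric because $dF$ is a $3$-form, and define $\nabla$ by \eqref{newnbl}. Computing its torsion returns $-\tfrac13 dF$, and a direct computation of $\nabla g$ from \eqref{newnbl} reproduces the first line of \eqref{skew0} unconditionally. The only remaining point is that $\nabla F$ equals the second line of \eqref{skew0}; by the comparison of the previous paragraph this is precisely \eqref{ff2}, equivalently \eqref{skew1}, which is now assumed. Hence $\nabla G=\nabla g+\nabla F$ satisfies \eqref{metein1}, so $\nabla$ is the desired NGT connection with skew-symmetric torsion, and uniqueness is automatic since $T$ and $\nabla$ are forced by \eqref{tordfnew} and \eqref{newnbl}. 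The main obstacle throughout is not conceptual but computational: the symmetrizations in \eqref{ein8} and \eqref{ff2}, and the verification that \eqref{skew1} and \eqref{ff2} coincide.
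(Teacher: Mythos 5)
Your necessity direction reproduces the paper's own proof almost step for step: the paper likewise extracts \eqref{tordfnew} from the second identity in \eqref{ein3} using cyclic invariance of the skew torsion, obtains \eqref{skew0} and \eqref{newnbl} from \eqref{ein5}, \eqref{ein6}, \eqref{genconein} together with \eqref{tordfnew}, and gets \eqref{skew1} by substituting $T=-\tfrac13 dF$ into \eqref{ein8}. The only real deviation is how you reach \eqref{ff2}: the paper applies \eqref{tordfnew} and \eqref{skew0} to the general formula \eqref{cconn2} and then substitutes the second line of \eqref{skew0} back in, while you compare $\nabla$ with $\nabla^g$ directly through the difference tensor read off from \eqref{newnbl}; these are the same computation in different packaging, and your additional observation that inserting \eqref{ff2} into \eqref{EisenhN1} reproduces \eqref{skew1} is correct and is a consistency check the paper does not make explicit.

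The gap is in the converse, at the words ``this is precisely \eqref{ff2}, equivalently \eqref{skew1}, which is now assumed.'' What you actually established is the implication \eqref{ff2} $\Rightarrow$ \eqref{skew1} (by substitution into \eqref{EisenhN1}); what the converse needs is the opposite implication \eqref{skew1} $\Rightarrow$ \eqref{ff2}, and nothing you wrote yields it. Indeed \eqref{EisenhN1} presents $N$ as the image of $\nabla^g F$ under a linear operator built from $A$, each of whose terms carries at least one factor of $A$; for a general generalized Riemannian metric $A$ may be degenerate, and at a point where $A=0$ this operator vanishes identically, so \eqref{skew1} holds trivially there while \eqref{ff2} remains a nontrivial constraint on $\nabla^g F$. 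Thus the implication you need is not a pointwise algebraic consequence of \eqref{skew1}, and ``two forms of the same constraint'' is precisely the statement still requiring proof --- it is the entire content of the converse. To be fair, the paper is equally silent here: its converse is the one-line assertion that the connection \eqref{newnbl} ``straightforwardly'' has the required properties. But since you made the equivalence explicit, you should either prove \eqref{skew1} $\Rightarrow$ \eqref{ff2} (as happens in the special geometries, where extra identities such as \eqref{Problem} or \eqref{ZamAP} recover $\nabla^g F$ from $N$ and $dF$, i.e.\ the operator in \eqref{EisenhN1} is injective on tensors with vanishing cyclic sum), or flag this step as open in the stated generality.
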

\begin{proof}
The second equation of \eqref{ein3} together with the
skew-symmetric property ot the torsion imply \eqref{tordfnew}.
Using the already obtained \eqref{tordfnew} reduces \eqref{ein8}
to \eqref{skew1}. Now, \eqref{skew0} and \eqref{newnbl} follow
from \eqref{ein5}, \eqref{ein6}, \eqref{genconein}  and
\eqref{tordfnew}. Applying \eqref{tordfnew} and \eqref{skew0} to
\eqref{cconn2}, we obtain
\begin{multline}\label{ff1}
(\nabla_XF)(Y,Z)=(\nabla^g_XF)(Y,Z)-\frac16\Big[dF(X,Y,AZ)+dF(Z,X,AY)\Big]\\-
\frac16\Big[2dF(X,AY,AZ)+dF(Z,AY,AX) +dF(Y,AX,AZ)\Big].
\end{multline}
A substitution of the second formula in (\ref{skew0}) into
(\ref{ff1}) gives \eqref{ff2}.

For the converse, it is straightforward to check that the
connection determining by \eqref{newnbl} satisfies the required
properties provided \eqref{skew1} holds. This completes the proof.
\end{proof}
%
%

\subsection{Almost Hermitian manifolds, $A^2=-1$} The condition \eqref{skew1} in the almost hermitian case takes the form
\begin{equation}\label{skewah}
N(X,Y,Z)=dF(X,Y,AZ)+\frac13dF(AX,Y,Z)+\frac13dF(X,AY,Z)+\frac13dF(AX,AY,AZ).
\end{equation}
Using the properties of the Nijenhuis tensor of an almost Hermitian
manifold, \eqref{propAHN1}, we get from \eqref{skewah} that the
exterior derivative $dF$ of the K\"ahler form satisfies
\begin{equation}\label{dfah}
dF(X,Y,AZ)=dF(X,AY,Z)=dF(AX,Y,Z)=-dF(AX,AY,AZ),
\end{equation}
i.e. $dF$ is of type $(3,0)+(0,3)$ with respect to the almost
complex structure $A$. Substitute \eqref{dfah} into \eqref{skewah}
to get
$
N(X,Y,Z)=\frac43dF(X,Y,AZ).
$

At this point we recall that an almost Hermitian manifold is said
to be Nearly K\"ahler if the covariant derivative of the almost
complex structure $A$ with respect to the Levi-Civita connection
$\nabla^g$ of the metric $g$ is skew-symmetric,
\begin{equation}\label{nk2}
(\nabla^g_XA)X=0 \quad \Leftrightarrow \quad (\nabla^g_XF)(X,Y)=0.
\end{equation}
The  Nearly K\"ahler manifolds (called almost Tachibana spaces in
\cite{Yano}) were developed by A. Gray \cite{Gr1,Gr2,Gr3} and
intensively studied since then in \cite{Ki,N1,MNS,N2,But,FIMU}.
Nearly K\"ahler manifolds in dimension 6 are Einstein manifolds
with positive scalar curvature, the Nijenhuis tensor $N$   is a
3-form and it is parallel with respect to the Gray characteristic
connection (see \cite{Ki}). This connection was defined by Gray
\cite{Gr1,Gr2,Gr3} and it turns out to be the unique linear
connection preserving the nearly K\"ahler structure and having
totally skew-symmetric torsion (see \cite{FI}). Nearly K\"ahler
manifolds appear also in supersymmetric string theories (see e.g.
\cite{Po1,PS,LNP,GLNP} etc.

We obtain from Theorem~\ref{pp1} that

\begin{thrm}\label{nika}
Let $(M,A,g,F)$ be an almost Hermitian manifold with a K\"ahler
2-form F considered as a  generalized Riemannian manifold
$(M,G=g+F)$. Then $(M,G)$ satisfies the Einstein metricity
condition \eqref{metein} with a totally skew-symmetric torsion $T$
if and only if it is a Nearly K\"ahler manifold.

The skew-symmetric torsion is determined by the condition
\begin{equation}\label{nk3}
T(X,Y,Z)=-\frac13dF(X,Y,Z)=\frac14N(X,Y,AZ).
\end{equation}
The connection is unique given by the formula
$$
g(\nabla_XY,Z)=g(\nabla^g_XY,Z)-\frac16dF(X,Y,Z)=g(\nabla^g_XY,Z)+\frac18N(X,Y,AZ).
$$
The Einstein metricity condition has the form
$(\nabla_XG)(Y,Z)=\frac13\Big[dF(X,Y,Z)-dF(X,Y,AZ)\Big].$ The
covariant derivative of $g$ and the K\"ahler form $F$ are
\begin{equation*}
\begin{split}
(\nabla_Xg)(Y,Z)=0; \qquad
(\nabla_XF)(Y,Z)=\frac13\Big[dF(X,Y,Z)-dF(X,Y,AZ)\Big].
\end{split}
\end{equation*}
\end{thrm}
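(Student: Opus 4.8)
The plan is to read off both implications from Theorem~\ref{pp1}, whose characterization \eqref{skew1} the discussion preceding the statement has already specialized, in the case $A^2=-1$, to the almost Hermitian identity \eqref{skewah}, and from there (using the Nijenhuis symmetry \eqref{propAHN1}) to \eqref{dfah} together with $N(X,Y,Z)=\frac43 dF(X,Y,AZ)$. Thus the only genuinely new content is to match this data to the Nearly K\"ahler condition \eqref{nk2}, and the common bridge for both directions is the single structural fact that, under either hypothesis, $dF$ is of type $(3,0)+(0,3)$, i.e. \eqref{dfah} holds.

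For the forward direction I would assume that $(M,G)$ admits a connection satisfying the Einstein metricity condition with skew-symmetric torsion. Theorem~\ref{pp1} then yields \eqref{skew1}, hence \eqref{skewah}, and combined with \eqref{propAHN1} this gives \eqref{dfah} exactly as recorded just before the theorem. The remaining step is to insert \eqref{dfah} into the general formula \eqref{ff2}: using $A^2=-1$ to convert each double $A$-insertion into a single one, so that $dF(X,AY,AZ)=dF(AX,Y,AZ)=dF(AX,AY,Z)=-dF(X,Y,Z)$, the four terms of \eqref{ff2} collapse to $(\nabla^g_XF)(Y,Z)=\frac13 dF(X,Y,Z)$. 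Since the right-hand side is a genuine $3$-form it is totally skew-symmetric, so $(\nabla^g_XF)(X,Y)=0$ and $(M,g,A)$ is Nearly K\"ahler by \eqref{nk2}.

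For the converse I would start from the Nearly K\"ahler condition \eqref{nk2}, i.e. the tensor $\alpha(X,Y,Z):=(\nabla^g_XF)(Y,Z)$ is totally skew-symmetric. Taking the cyclic sum shows $dF=3\alpha$, so $\alpha=\frac13 dF$. The identity $(\nabla^g_XA)A=-A(\nabla^g_XA)$ (valid on any almost Hermitian manifold from $A^2=-1$ and $\nabla^g g=0$) gives $\alpha(X,AY,Z)=\alpha(X,Y,AZ)$; combined with total antisymmetry this lets $A$ move freely through all three slots and produces \eqref{dfah}. Finally I would compute the Nijenhuis tensor from the torsion-free specialization of \eqref{nuj1} (the connection $\nabla^g$ has $T=0$), obtaining $N(X,Y,Z)=\alpha(AX,Y,Z)-\alpha(AY,X,Z)+\alpha(X,Y,AZ)-\alpha(Y,X,AZ)$; the single-$A$ rule and skew-symmetry reduce this to $4\alpha(X,Y,AZ)=\frac43 dF(X,Y,AZ)$, which is precisely \eqref{skewah} and hence \eqref{skew1}. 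Theorem~\ref{pp1} then produces the desired connection.

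With \eqref{dfah} established, all the explicit formulas in the statement are substitutions. Formula \eqref{tordfnew} gives $T=-\frac13 dF$, and $N=\frac43 dF(\cdot,\cdot,A\cdot)$ rewrites this as $T=\frac14 N(\cdot,\cdot,A\cdot)$, matching \eqref{nk3}; in \eqref{newnbl} the two $A$-terms cancel by \eqref{dfah}, leaving $g(\nabla_XY,Z)=g(\nabla^g_XY,Z)-\frac16 dF(X,Y,Z)$, which the same rewriting turns into the $+\frac18 N(X,Y,AZ)$ form; and the covariant derivatives follow from \eqref{skew0} together with $(\nabla^g_XF)=\frac13 dF$, which forces $\nabla_Xg=0$. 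I expect the main obstacle to lie in the converse: deriving \eqref{dfah} from skew-symmetry of $\nabla^g F$ and then evaluating the Nijenhuis formula to land on the exact coefficient $\frac43$, since every step there hinges on carefully tracking how $A$ passes between the three arguments and on the interplay of the anti-commutation rule with total antisymmetry.
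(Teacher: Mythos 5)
Your proposal is correct and takes essentially the same route as the paper: both directions are driven by Theorem~\ref{pp1} together with the almost Hermitian specialization \eqref{skewah} and the type condition \eqref{dfah}, your forward step inserting \eqref{dfah} into \eqref{ff2} where the paper cites \eqref{cconn2} — an immaterial difference since both collapse to $(\nabla^g_XF)(Y,Z)=\tfrac13 dF(X,Y,Z)$. If anything you supply more detail than the paper, whose treatment of the converse is the single sentence that it ``follows from Theorem~\ref{pp1}'', whereas you explicitly derive \eqref{dfah} and $N(X,Y,Z)=\tfrac43 dF(X,Y,AZ)$ from the Nearly K\"ahler condition (via the anti-commutation rule $(\nabla^g_XA)A=-A(\nabla^g_XA)$, $dF=3\nabla^gF$, and the torsion-free case of \eqref{nuj1}), which is exactly the verification of \eqref{skew1} that the paper leaves implicit.
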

\begin{proof}
Applying  \eqref{nk3} and \eqref{dfah} to \eqref{cconn2} we
conclude  that the Nearly K\"ahler condition \eqref{nk2} holds.
The rest of the theorem follow from Theorem~\ref{pp1}.
\end{proof}

\subsection{Almost para-Hermitian manifolds, $A^2=1$} The condition \eqref{skew1} in the almost para-hermitian case takes the form
\begin{equation*}
N(X,Y,Z)=\frac13dF(X,Y,AZ)+\frac13dF(AX,Y,Z)+\frac13dF(X,AY,Z)+\frac13dF(AX,AY,AZ),
\end{equation*}
which implies that the Nijenhuis tensor is totally skew symmetric.

We obtain from Theorem~\ref{pp1} and \cite[Proposition~3.1]{IZ}
\begin{thrm}\label{ngtskewp}
Let $(M,A,g,F)$ be an almost para-Hermitian manifold with a
K\"ahler 2-form F considered as a  generalized Riemannian manifold
$(M,G=g+F)$. Then $(M,G)$ satisfies the Einstein metricity
condition \eqref{metein} with a totally skew-symmetric torsion $T$
if and only if the Nijenhuis tensor is totally skew-symmetric.

The skew-symmetric torsion is determined by
$T(X,Y,Z)=-\frac13dF(X,Y,Z)$, the Einstein  metricity condition
has the form
$(\nabla_XG)(Y,Z)=\frac13\Big[dF(X,Y,Z)-dF(X,Y,AZ)\Big]$ and the
equalities \eqref{skew0}, \eqref{newnbl} and \eqref{ff2} hold with
$A^2=1$.

\end{thrm}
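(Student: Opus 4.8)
The plan is to specialize Theorem~\ref{pp1} to the case $A^2=I$ and to run the argument of the almost Hermitian Theorem~\ref{nika} with the sign of $A^2$ reversed. First I would substitute $A^2=I$ into the master relation \eqref{skew1}: every monomial $dF(A^2\cdot,\cdot,\cdot)$ loses its $A^2$, and a short bookkeeping that uses only the fact that $dF$ is alternating collapses the two correction brackets (the second contributes $-\frac13 dF(X,Y,AZ)$, the third vanishes). What survives is exactly the reduced identity
$$N(X,Y,Z)=\frac13\Big[dF(X,Y,AZ)+dF(AX,Y,Z)+dF(X,AY,Z)+dF(AX,AY,AZ)\Big]$$
displayed just before the statement. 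This already settles the forward implication: the right-hand side is assembled from the alternating tensor $dF$ with a single $A$ inserted in each term, so interchanging $Y$ and $Z$ reverses its sign; since $N$ is skew in its first two slots by definition, $N$ is then totally skew-symmetric.

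For the converse I would start from the hypothesis that $N$ is totally skew-symmetric and recover the reduced identity, after which the converse half of Theorem~\ref{pp1} manufactures the connection. The inputs are the universal identity \eqref{EisenhN1}, which expresses $N$ through the Levi-Civita covariant derivative of $F$, and the structural property \eqref{propAHN1}, both valid on any almost para-Hermitian manifold. Total skew-symmetry of $N$ constrains the Levi-Civita derivative of $F$ so that the right-hand side of \eqref{EisenhN1} reduces to the $A$-twisted sum of $dF$ appearing on the right-hand side above; this is the content of \cite[Proposition~3.1]{IZ}, which I would quote to identify $N$ with that combination and hence to verify the reduced \eqref{skew1}. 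Feeding this into Theorem~\ref{pp1} yields a unique linear connection satisfying \eqref{metein} with totally skew-symmetric torsion.

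The explicit data are then inherited from Theorem~\ref{pp1} by setting $A^2=1$: the second line of \eqref{ein3} together with skew-symmetry of $T$ gives $T(X,Y,Z)=-\frac13 dF(X,Y,Z)$, the reformulation \eqref{metein1} puts the metricity condition into the form $(\nabla_XG)(Y,Z)=\frac13\big[dF(X,Y,Z)-dF(X,Y,AZ)\big]$, and the formulas \eqref{skew0}, \eqref{newnbl} and \eqref{ff2} carry over verbatim with $A^2=1$, since they were derived in the general framework of Theorem~\ref{pp1} without using the value of $A^2$.

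The hard part is the converse, namely that skew-symmetry of $N$ is not only necessary but sufficient for the reduced \eqref{skew1}. In the almost Hermitian situation of Theorem~\ref{nika} the sign $A^2=-I$ keeps the coefficient $1$ in front of $dF(X,Y,AZ)$ and forces the strictly stronger Nearly K\"ahler constraint \eqref{dfah}; here the opposite sign $A^2=+I$ lowers that coefficient to $\frac13$, the extra constraint collapses, and skew-symmetry of $N$ alone closes the loop. Making this collapse explicit — equivalently, checking that \eqref{EisenhN1} and \eqref{propAHN1} combine to the reduced \eqref{skew1} whenever $N$ is skew — is the single genuine computation, and it is exactly what \cite[Proposition~3.1]{IZ} provides.
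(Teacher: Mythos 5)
Your proposal is correct and follows essentially the same route as the paper: specialize \eqref{skew1} to $A^2=I$ (your bookkeeping of the two correction brackets matches the paper's reduced identity), observe that the right-hand side is alternating so that total skew-symmetry of $N$ is forced, delegate the converse to \cite[Proposition~3.1]{IZ}, and read off the torsion, the metricity form and \eqref{skew0}, \eqref{newnbl}, \eqref{ff2} from Theorem~\ref{pp1}. Your closing observation about why the sign $A^2=+I$ (coefficient $\frac13$ instead of $1$ on $dF(X,Y,AZ)$) removes the extra Nearly K\"ahler--type constraint is accurate, but it is elaboration rather than a departure from the paper's argument.
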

\begin{rmrk}
In particular, the nearly paraK\"ahler manifolds defined by
$(\nabla^g_XA)X=0$ have totally skew symmetric Nijenhuis tensor,
$dF$ share the properties \eqref{propAHN1} of the Nijenhuis tensor
and $N(X,Y,Z)=\frac43dF(X,Y,AZ)$ \cite[Proposition~5.1]{IZ}. In
this case the  NGT connection with skew symmetric torsion
preserves the symmetric part of $G$, $\nabla g=0$.
\end{rmrk}
\begin{rmrk}
Combining Theorem~\ref{ngtskewp}  and Corollary~\ref{APHskew} we
conclude that an almost para-Hermitian manifold admits an NGT
connection with totally skew symmetric torsion if and only if it
admits a linear connection preserving the almost para-Hermitian
structure with a totally skew-symmetric torsion.
\end{rmrk}

\subsection{Almost contact metric structures.}
Let us consider the case of an almost contact metric structure
which is defined in the Section 4.3. The fact $g(\xi,\xi)=1$ and
\eqref{ff2} imply
\begin{equation}\label{kill}
(\nabla^g_X\eta)Z=g(\nabla^g_X\xi,Z)=-g((\nabla^g_XA)\xi,AZ)=\frac13dF(X,AZ,\xi)+\frac16dF(AX,Z,\xi).
\end{equation}
Consequently, we obtain
\begin{equation}\label{xi1}
d\eta(X,Z)=\frac12dF(X,AZ,\xi)+\frac12dF(AX,Z,\xi), \quad
d\eta(X,\xi)=0, \quad d\eta(AX,Z)=d\eta(X,AZ).
\end{equation}
Applying \eqref{acon} and \eqref{xi1}, we simplify \eqref{skew1}
to get
\begin{multline}\label{skewac}
N(X,Y,Z)=dF(X,Y,AZ)+\frac13dF(AX,Y,Z)+\frac13dF(X,AY,Z)+\frac13dF(AX,AY,AZ)\\
-\frac13\Big[d\eta(Y,Z)\eta(X)+d\eta(Z,X)\eta(Y)-d\eta(X,Y)\eta(Z)\Big].
\end{multline}

On the other hand, \cite[Lemma~6.1]{Blair}, in our notation, reads

\begin{equation}\label{Problem}
\aligned 2g((\nabla^g_XA)Y,Z)&=dF(X,Y,Z)-dF(X,AY,AZ)+N^{ac}(Y,Z,AX)\\
&+\Big[d\eta(AY,Z)-d\eta(AZ,Y)\Big]\eta(X)-d\eta(X,AY)\eta(Z)+d\eta(X,AZ)\eta(Y).
\endaligned
\end{equation}
Substitute \eqref{ff2} into \eqref{Problem}, use \eqref{xi1} and
observe $N^{ac}(Y,Z,AX)=N(Y,Z,AX)$ to get
\begin{multline}\label{skewacB}
N(Y,Z,AX)=-2d\eta(AY,Z)\eta(X)+d\eta(X,AY)\eta(Z)-d\eta(X,AZ)\eta(Y)\\-\frac13dF(X,Y,Z)+\frac53dF(X,AY,AZ)-\frac13dF(AX,Y,AZ)-\frac13dF(AX,AY,Z).
\end{multline}
Setting $X=\xi$ into \eqref{skewacB} gives
$0=-\frac13dF(\xi,Y,Z)+\frac53dF(\xi,AY,AZ)-2d\eta(AY,Z) $ which,
in view of  \eqref{xi1}, yields
\begin{equation}\label{dfdet}
dF(Y,AZ,\xi)=d\eta(Y,Z)=dF(AY,Z,\xi).
\end{equation}
Applying \eqref{dfdet} to \eqref{kill} leads to
\begin{equation}\label{kill1}
(\nabla^g_X\eta)Y=g(\nabla^g_X\xi,Y)=\frac12d\eta(X,Y),
\end{equation}
which shows that the vector field $\xi$  is a Killing vector
field.

The equalities \eqref{skewac} and \eqref{dfdet} imply
\begin{equation}\label{nijxi}
N(X,Y,\xi)=N(\xi,X,Y)=d\eta(X,Y).
\end{equation}
Applying \eqref{nijxi}, we obtain from \eqref{skewacB}
\begin{multline}\label{skewacB1}
N(X,Y,Z)=d\eta(Y,Z)\eta(X)+\frac53d\eta(X,Y)\eta(Z)+d\eta(Z,X)\eta(Y)\\+\frac13dF(X,Y,AZ)-\frac53dF(AX,AY,AZ)-\frac13dF(AX,Y,Z)-\frac13dF(X,AY,Z).
\end{multline}
Compare \eqref{skewac} with \eqref{skewacB1} to  derive
\begin{equation}\label{finac}
3dF(AX,AY,AZ)+dF(X,Y,AZ)+dF(AX,Y,Z)+dF(X,AY,Z)=2(d\eta\wedge\eta)(X,Y,Z)
\end{equation}
Using \eqref{acon} and \eqref{dfdet}, we derive from \eqref{finac}
the following
\begin{equation}\label{finac1}
dF(AX,AY,AZ)+dF(X,Y,AZ)=\eta(X)d\eta(Y,Z)+\eta(Y)d\eta(Z,X).
\end{equation}
Substitute \eqref{finac} and \eqref{finac1} into \eqref{skewacB1}
to get
\begin{equation}\label{skewacB2}
N(X,Y,Z)=-\frac43dF(AX,AY,AZ) + (d\eta\wedge\eta)(X,Y,Z),
\end{equation}
which, in particular, shows that the Nijenhuis tensor $N$ is
totally skew symmetric.

Applying \eqref{finac1} to \eqref{ff2} we obtain
 \begin{multline}\label{ff2f}
g((\nabla^g_XA)Y,Z)= \frac{1}{3}
dF(X,Y,Z)-\frac16\eta(Y)d\eta(Z,AX)+\frac13\eta(X)d\eta(Y,AZ)
+\frac16\eta(Z)d\eta(Y,AX)\\=-\frac13dF(AX,AY,Z)+\frac16\eta(Z)d\eta(Y,AX)-\frac12\eta(Y)d\eta(AZ,X).
\end{multline}
We derive from \eqref{ff2f} that $
g((\nabla^g_XA)Y,Z)+g((\nabla^g_YA)X,Z)=-\frac12\eta(Y)d\eta(AZ,X)-\frac12\eta(X)d\eta(AZ,Y),
$
i.e. the structure is nearly-cosymplectic, $(\nabla^g_XA)X=0$ if
and only if the 1-form $\eta$ is closed, $d\eta=0$.

It seems reasonable to consider the following
\begin{dfn}
An almost contact metric manifold $(M^{2n+1},A,g,F,\eta,\xi)$ is
said to be \emph{almost-nearly cosymplectic} if the Levi-Civita
covariant derivative of the fundamental 2-form satisfies the
following condition
\begin{equation}\label{ff3f}
g((\nabla^g_XA)Y,Z)=-\frac13dF(AX,AY,Z)+\frac16\eta(Z)d\eta(Y,AX)-\frac12\eta(Y)d\eta(AZ,X).
\end{equation}
In particular, the vector field $\xi$ is Killing and
\eqref{skewacB2} holds.
\end{dfn}

Theorem~\ref{pp1} yields

\begin{thrm}\label{acnika}
Let $(M,A,g,F,\eta,\xi)$ be an almost contact metric manifold with
a fundamental 2-form F considered as a generalized Riemannian
manifold $(M,G)$ with a generalized Riemannian metric $G=g+F$.
Then $(M,G)$ satisfies the Einstein metricity condition
\eqref{metein} with a totally skew-symmetric torsion $T$ if and
only if it is an almost-nearly cosymplectic, i.e. \eqref{ff3f}
holds.

The skew-symmetric torsion is determined by the condition
\begin{multline*}
T(X,Y,Z)=-\frac13dF(X,Y,Z)\\=-\frac14N(AX,AY,AZ)+\frac13\Big[
\eta(X)d\eta(Y,AZ)+\eta(Y)d\eta(Z,AX)+\eta(Z)d\eta(X,AY)\Big].
\end{multline*}
The connection is unique given by the formula
$$
g(\nabla_XY,Z)=g(\nabla^g_XY,Z)-\frac16dF(X,Y,Z)+\frac16\Big[\eta(X)d\eta(Y,Z)+\eta(Y)d\eta(X,Z)
\Big].
$$
The Einstein metricity condition has the form
$(\nabla_XG)(Y,Z)=\frac13\Big[dF(X,Y,Z)-dF(X,Y,AZ)\Big]$. The
covariant derivative of  $g$ and $F$ are
$(\nabla_Xg)(Y,Z)=\frac16\Big[\eta(Y)d\eta(Z,X)+\eta(Z)d\eta(Y,X)\Big]$,
\begin{equation*}
\begin{split}
 (\nabla_XF)(Y,Z)=\frac13\Big[dF(X,Y,Z)-dF(X,Y,AZ)\Big]-\frac16\Big[\eta(Y)d\eta(Z,X)+\eta(Z)d\eta(Y,X)\Big].
\end{split}
\end{equation*}
\end{thrm}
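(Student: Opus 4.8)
The plan is to route everything through Theorem~\ref{pp1}, which asserts that a generalized Riemannian manifold admits an NGT connection with totally skew-symmetric torsion precisely when the Nijenhuis tensor and $dF$ are tied by \eqref{skew1}, and that in this case the torsion is $T=-\frac13dF$ with the connection and the covariant derivatives $\nabla g,\nabla F$ fixed by \eqref{tordfnew}, \eqref{skew0}, \eqref{newnbl}. Hence the entire theorem reduces to proving, for an almost contact metric structure, the equivalence of condition \eqref{skew1} with the almost-nearly cosymplectic condition \eqref{ff3f}; the displayed torsion, connection and metricity formulas then drop out by specializing the formulas of Theorem~\ref{pp1} and rewriting them by means of \eqref{finac}, \eqref{finac1}.

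For the implication from an NGT structure with skew torsion to the almost-nearly cosymplectic condition I would simply assemble the chain of identities already established just before the statement. Specializing \eqref{skew1} through the compatibility relations \eqref{acon} gives \eqref{skewac}, while inserting the NGT formula \eqref{ff2} into Blair's universal identity \eqref{Problem}, and using $\eta(AX)=0$ so that $N^{ac}(Y,Z,AX)=N(Y,Z,AX)$, produces the second expression \eqref{skewacB}. Evaluating \eqref{skewacB} at $X=\xi$ yields \eqref{dfdet}, which forces $\xi$ to be Killing via \eqref{kill}--\eqref{kill1} and gives \eqref{nijxi}; comparing \eqref{skewac} with \eqref{skewacB1} then produces \eqref{finac} and \eqref{finac1}, hence \eqref{skewacB2} and finally \eqref{ff2f}, which is exactly \eqref{ff3f}.

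For the converse I would start from \eqref{ff3f} and recover \eqref{skew1} directly. Since $\nabla^g$ is torsion-free, its Nijenhuis tensor is given by \eqref{nuj1} with $T=0$, so substituting \eqref{ff3f} into that formula expresses $N$ entirely through $dF$, $d\eta$ and $\eta$, and the task is to check that the result coincides with the right-hand side of \eqref{skewac}. The auxiliary relations are drawn from \eqref{ff3f} itself: setting $Y=\xi$, where $A\xi=0$ annihilates the first term, shows that $\xi$ is Killing; the cyclic sum of \eqref{ff3f}, combined with the torsion-free identity $dF(X,Y,Z)=(\nabla^g_XF)(Y,Z)+(\nabla^g_YF)(Z,X)+(\nabla^g_ZF)(X,Y)$, reproduces \eqref{finac}; and the $X=\xi$ specialization yields \eqref{dfdet}, whence \eqref{finac1} follows by \eqref{acon}. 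With these in hand the computed $N$ matches \eqref{skewac}, so Theorem~\ref{pp1} supplies the unique connection, and \eqref{tordfnew}, \eqref{newnbl}, \eqref{skew0} rewritten through \eqref{finac1} give the stated torsion, connection and covariant derivatives.

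The step I expect to be the genuine obstacle is this converse extraction. In the direct implication the intermediate relations \eqref{dfdet}, \eqref{finac} and \eqref{finac1} were obtained with the help of the NGT formula \eqref{ff2}, which is unavailable once one assumes only \eqref{ff3f}; reproducing them from \eqref{ff3f} alone, through its $Y=\xi$ and $X=\xi$ specializations and its cyclic sum against the structural constraints \eqref{acon}, is the delicate part. I would therefore secure the Killing property of $\xi$ and the determining relation \eqref{dfdet} first, and only then run the algebraic chain \eqref{skewacB}--\eqref{skewac} in reverse to conclude \eqref{skew1}.
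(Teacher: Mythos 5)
Your proposal is correct and follows the paper's strategy: both reduce Theorem~\ref{acnika} to Theorem~\ref{pp1}, and your forward implication is verbatim the paper's chain \eqref{skewac}, \eqref{skewacB}, \eqref{dfdet}, \eqref{kill1}, \eqref{nijxi}, \eqref{finac}, \eqref{finac1}, \eqref{skewacB2}, \eqref{ff2f}. Where you genuinely diverge is the converse, and you have correctly identified that this is exactly the step the paper does not write out: its entire stated proof is ``Theorem~\ref{pp1} yields,'' the implicit route (made explicit only in the paracontact analogue, Theorem~\ref{acnikap}) being to recover \eqref{ff2} from \eqref{ff3f} and feed it back into Blair's identity \eqref{Problem} so as to land on \eqref{skewac}, hence on \eqref{skew1}. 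You instead compute $N$ directly from \eqref{ff3f} via the torsion-free case of \eqref{nuj1}; this is equally valid, since for the Levi-Civita connection \eqref{nuj1} expresses $N$ purely through $\nabla^g A$, and it is arguably cleaner because it does not invoke Blair's lemma a second time. Two small corrections to the ``delicate part'' you flagged: it is the $Z=\xi$ (not the $X=\xi$) specialization of \eqref{ff3f} that, combined with the Killing property $g(\nabla^g_X\xi,Y)=\frac12 d\eta(X,Y)$ obtained from the $Y=\xi$ specialization and with $A^2=-id+\eta\otimes\xi$, first forces $d\eta(AX,Y)=d\eta(X,AY)$ and then \eqref{dfdet}; and the plain cyclic sum of \eqref{ff3f} against $dF=\sigma_{XYZ}(\nabla^g_XF)(Y,Z)$ does not by itself produce \eqref{finac} --- one must also use the same identity with arguments replaced by $AX,AY,AZ$ and simplify via \eqref{dfdet} before the comparison closes. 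Neither point invalidates your plan; it only means the system of specializations needed is slightly larger than the two you named, after which \eqref{xi1} holds, \eqref{skewac} becomes equivalent to \eqref{skew1}, and Theorem~\ref{pp1} together with \eqref{tordfnew}, \eqref{newnbl}, \eqref{skew0} and \eqref{finac1} delivers all displayed formulas, as you indicate.
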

\subsection{Almost-nearly cosymplectic structures} We list some
elementary properties of the almost-nearly cosimplectic structures
defined by \eqref{ff3f}.

First, we have due to \eqref{kill1}
\begin{cor}On an almost-nearly cosimplectic manifold the vector field $\xi$
is  Killing.
\end{cor}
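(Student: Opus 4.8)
The plan is to show that $g(\nabla^g_X\xi,Y)$ is skew-symmetric in $X,Y$, since a vector field is Killing precisely when $\mathcal L_\xi g=0$, equivalently $g(\nabla^g_X\xi,Y)+g(\nabla^g_Y\xi,X)=0$. The quickest route is to recover \eqref{kill1} directly from the defining condition \eqref{ff3f}. First I would record the bridge identity $g(\nabla^g_X\xi,Z)=-g((\nabla^g_XA)\xi,AZ)$ already used in \eqref{kill}: it follows from $A\xi=0$ (so $(\nabla^g_XA)\xi=-A\nabla^g_X\xi$), the $g$-skew-symmetry of $A$ noted in the proof of Theorem~\ref{gmain}, the compatibility $g(AU,AV)=g(U,V)-\eta(U)\eta(V)$ of \eqref{acon}, and $g(\nabla^g_X\xi,\xi)=\tfrac12 X\,g(\xi,\xi)=0$.

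Next I would feed $Y=\xi$ and the third argument $AZ$ into \eqref{ff3f}. Using $A\xi=0$, $\eta(AZ)=g(AZ,\xi)=-g(Z,A\xi)=0$, $\eta(\xi)=1$ and $A^2Z=-Z+\eta(Z)\xi$ from \eqref{acon}, the three terms on the right-hand side collapse to $g((\nabla^g_XA)\xi,AZ)=\tfrac12 d\eta(Z,X)-\tfrac12\eta(Z)\,d\eta(\xi,X)$. Combining with the bridge identity yields $g(\nabla^g_X\xi,Z)=\tfrac12 d\eta(X,Z)+\tfrac12\eta(Z)\,d\eta(\xi,X)$.

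The one remaining obstacle is the stray term $\tfrac12\eta(Z)\,d\eta(\xi,X)$, and disposing of it amounts to proving $\xi\lrcorner d\eta=0$. I would obtain this by setting $X=\xi$ in the last expression to read off $g(\nabla^g_\xi\xi,Z)=\tfrac12 d\eta(\xi,Z)$, and then expanding $d\eta(X,\xi)=g(\nabla^g_X\xi,\xi)-g(\nabla^g_\xi\xi,X)=0-\tfrac12 d\eta(\xi,X)=\tfrac12 d\eta(X,\xi)$, which forces $d\eta(X,\xi)=0$ for all $X$. Hence the stray term vanishes and $g(\nabla^g_X\xi,Z)=\tfrac12 d\eta(X,Z)$, i.e. \eqref{kill1} holds on any almost-nearly cosymplectic manifold.

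Finally, since $\tfrac12 d\eta(X,Y)$ is manifestly skew-symmetric in $X$ and $Y$, we get $g(\nabla^g_X\xi,Y)+g(\nabla^g_Y\xi,X)=0$, so $\mathcal L_\xi g=0$ and $\xi$ is Killing. I expect the step $\xi\lrcorner d\eta=0$ to be the only genuinely substantive point; everything else is a direct substitution into \eqref{ff3f} using the almost-contact identities \eqref{acon}.
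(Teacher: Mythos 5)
Your proof is correct, and it takes a genuinely different route from the paper's. The paper obtains the Killing property by citing \eqref{kill1}, but \eqref{kill1} itself was derived in the preceding subsection from the NGT hypothesis \eqref{ff2}, through the chain \eqref{kill} $\to$ \eqref{xi1} $\to$ a comparison with Blair's Lemma \eqref{Problem} $\to$ \eqref{dfdet} $\to$ \eqref{kill1}; in other words, the paper proves the statement for almost contact metric manifolds admitting the NGT connection with skew-symmetric torsion, and the corollary for abstract almost-nearly cosymplectic manifolds rests on the equivalence packaged into Theorem~\ref{acnika}. You instead work directly from the defining condition \eqref{ff3f}: the bridge identity $g(\nabla^g_X\xi,Z)=-g((\nabla^g_XA)\xi,AZ)$ is verified correctly (via $(\nabla^g_XA)\xi=-A\nabla^g_X\xi$, the compatibility in \eqref{acon}, and $\eta(\nabla^g_X\xi)=0$), the substitution $Y=\xi$, $Z\mapsto AZ$ into \eqref{ff3f} collapses correctly to $\tfrac12 d\eta(Z,X)-\tfrac12\eta(Z)d\eta(\xi,X)$, and your bootstrap for $\xi\lrcorner d\eta=0$ (setting $X=\xi$ to get $g(\nabla^g_\xi\xi,Z)=\tfrac12 d\eta(\xi,Z)$ and then forcing $d\eta(X,\xi)=\tfrac12 d\eta(X,\xi)$) is a clean and valid trick. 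What each approach buys: the paper's longer route produces along the way the $dF$-identities \eqref{xi1} and \eqref{dfdet} that are needed elsewhere in the section, whereas your argument is self-contained, uses only \eqref{ff3f} and the almost-contact identities \eqref{acon}, and arguably closes a small logical gap, since it proves the corollary exactly under the hypothesis in which it is stated rather than under the a priori stronger NGT assumption.
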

\begin{cor}
If an almost-nearly cosymplectic structure  is   normal  then it
is cosymmplectic (coK\"ahler), i.e. $d\eta=dF=0$.
\end{cor}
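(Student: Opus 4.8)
The plan is to confront the normality hypothesis with the structural identities already recorded for almost-nearly cosymplectic manifolds. Recall that an almost contact metric structure is \emph{normal} exactly when the almost contact Nijenhuis tensor of \eqref{nujac0} vanishes, $N^{ac}=N+d\eta\otimes\eta=0$, equivalently $N(X,Y,Z)=-d\eta(X,Y)\eta(Z)$. My first goal is to extract $d\eta=0$. Evaluating this relation on $Z=\xi$ gives $N(X,Y,\xi)=-d\eta(X,Y)$, whereas the structural identity \eqref{nijxi}, valid for this class, asserts $N(X,Y,\xi)=d\eta(X,Y)$; comparing the two forces $2\,d\eta(X,Y)=0$, hence $d\eta=0$. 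As a byproduct \eqref{kill1} then degenerates to $\nabla^g\xi=0$, so $\xi$ is parallel, though this is not needed below.

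With $d\eta=0$ secured I would substitute it back into the two remaining identities of the class. On the one hand \eqref{skewacB2} collapses to $N(X,Y,Z)=-\tfrac43\,dF(AX,AY,AZ)$; since normality together with $d\eta=0$ gives $N=N^{ac}=0$, this yields $dF(AX,AY,AZ)=0$ for all $X,Y,Z$. On the other hand \eqref{finac1} collapses to $dF(AX,AY,AZ)+dF(X,Y,AZ)=0$, so the vanishing just obtained upgrades to $dF(X,Y,AZ)=0$ for all $X,Y,Z$.

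It remains to deduce $dF=0$ from $dF(X,Y,AZ)=0$. By \eqref{acon} the endomorphism $A$ kills $\xi$ and satisfies $A^2=-\mathrm{id}$ on the contact distribution $\ker\eta$, so $A$ restricts to an isomorphism of $\ker\eta$ onto itself; hence the vectors $AZ$ sweep out all of $\ker\eta$, and $dF(X,Y,AZ)=0$ says precisely that the $3$-form $dF$ vanishes whenever its last argument is horizontal. Being totally antisymmetric, $dF$ then vanishes as soon as \emph{any} argument is horizontal, and the only remaining possibility would pair three copies of the one-dimensional vertical direction spanned by $\xi$, which is zero by skew-symmetry. Therefore $dF=0$, and together with $d\eta=0$ this is exactly the cosymplectic (coK\"ahler) condition.

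The main obstacle here is conceptual rather than computational: one must correctly read off that the word normal means the vanishing of $N^{ac}$ in the present sign conventions, and then carry out the final descent from $dF(X,Y,AZ)=0$ to $dF=0$, for which the nondegeneracy of $A$ on $\ker\eta$ (i.e.\ $A^2=-\mathrm{id}$ there) is the essential ingredient. Everything in between is a direct substitution of $d\eta=0$ into \eqref{skewacB2} and \eqref{finac1}.
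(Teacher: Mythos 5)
Your proof is correct and takes essentially the same route as the paper: normality forces $N=-d\eta\otimes\eta$, evaluation at $\xi$ (your appeal to \eqref{nijxi}; the paper's implicit evaluation of \eqref{skewacB2} at $Z=\xi$) yields $d\eta=0$, and then \eqref{skewacB2} gives $dF(AX,AY,AZ)=0$. The only cosmetic difference is the endgame, where you upgrade to $dF(X,Y,AZ)=0$ via \eqref{finac1} while the paper cites \eqref{dfdet}, and you spell out the horizontal-vertical decomposition argument that the paper leaves implicit.
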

\begin{proof}
An almost contact structure is normal if the almost contact
Nijenhuis tensor vanishes, $N^{ac}=0$ which, in view of
\eqref{nujac0}, gives $N(X,Y,Z)=-\eta(Z)d\eta(X,Y)$. Now,
\eqref{skewacB2} yields $d\eta=0$ and $dF(AX,AY,AZ)=0$. The last
equality together with \eqref{dfdet} implies $dF=0$.
\end{proof}
\begin{rmrk}
An almost-nearly cosymplectic structure is never contact because
of \eqref{xi1}. Indeed if $d\eta=F$ then $d\eta$ is of type (1,1)
with respect to $A$ which contradicts \eqref{xi1}.
\end{rmrk}
Suppose an almost-nearly cosymplectic structure has closed 1-form
$\eta, d\eta=0$. Then \eqref{xi1} yields that $\eta$ is
$\nabla^g$-parallel, $\nabla^g\eta=0$. Then the distribution
$H=Ker(\eta)$ is involutive and therefore it is Frobenius
integrable. The integral submanifold $N^{2n}$ is a nearly K\"ahler
manifold due to \eqref{ff3f}. Hence, an almost-nearly cosymplectic
manifold with a closed 1-form $\eta$ is locally a product of a
nearly K\"ahler manifold with the real line.

More general, we have  $\mathcal L_{\xi}\eta=d\eta(\xi,.)=0$ due
to \eqref{xi1}. Since $\xi$ is a Killing vector field, we
calculate using \eqref{kill1}, \eqref{dfdet} and \eqref{ff2f} that
\begin{equation}\label{lieF}
\mathcal
(L_{\xi}F)(X,Y)=(\nabla^g_{\xi}F)(X,Y)-g(\nabla^g_X\xi,AY)+g(\nabla^g_Y\xi,AX)=d\eta(Y,AX).
\end{equation}
Suppose  $M^{2n+1}$ is compact and the almost contact metric
structure is regular, i.e. the 2n-dimensional quotient space
$M^{2n+1}/\xi$ is a smooth manifold. If in addition  $d\eta=0$
then \eqref{lieF} and \eqref{ff2f} imply that $M^{2n+1}$ is a
trivial circle bundle over the Nearly K\"ahler manifold
$M^{2n+1}/\xi$. Conversely, start with a Nearly K\"ahler manifold
$N^{2n}$ we obtain on $M^{2n+1}\times\mathbb R^+$ an almost-nearly
cosymplectic structure  taking $\eta=dt$.

\subsection{Almost paracontact metric structures}  Almost paracontact
metric structure is defined in  Section 4.6. Equation \eqref{ff2}
 together with equation \eqref{apcon} implies the equalities
\begin{equation}\label{xi1AP}
\aligned
(\nabla^g_X\eta)Y=g(\nabla^g_X\xi,Y)=-g((\nabla^g_XA)AY,\xi)=-\frac{1}{3}dF(X,AY,\xi)+\frac16dF(AX,Y,\xi);\\
d\eta(X,Y)=
-\frac16dF(AX,Y,\xi)-\frac16dF(X,AY,\xi), \\ d\eta(X,\xi)=0, \quad
d\eta(AX,Y)=d\eta(X,AY).
\endaligned \end{equation}
With the help of \eqref{xi1AP} we write the condition
(\ref{skew1}) in the form
\begin{multline}\label{skew1AP}
N(X,Y,Z)=\frac13dF(X,Y,AZ)+\frac13dF(AX,Y,Z)+\frac13dF(X,AY,Z)+\frac13dF(AX,AY,AZ)\\
-\eta(X)d\eta(Y,Z)-\eta(Y)d\eta(Z,X)+\eta(Z)d\eta(X,Y)
\end{multline}


Now Theorem~\ref{pp1} leads to the following
\begin{thrm}\label{acnikap}
Let $(M,A,g,F,\eta,\xi)$ be an almost paracontact metric manifold
with a fundamental 2-form F considered as a generalized Riemannian
manifold $(M,G=g+F)$. Then $(M,G)$ satisfies the Einstein
metricity condition \eqref{metein} with a totally skew-symmetric
torsion $T$ if and only if the condition \eqref{ff2} holds

The skew-symmetric torsion is determined by \eqref{tordfnew},  the
connection is unique given by the formula \eqref{newnbl}.  The
covariant derivatives of $g$ and $F$ are determined  by
\eqref{skew0}.
\end{thrm}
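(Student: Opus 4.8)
The plan is to derive everything from Theorem~\ref{pp1}, whose relation \eqref{skew1} characterizes the existence of a linear connection satisfying the Einstein metricity condition \eqref{metein} with totally skew-symmetric torsion. The necessity of \eqref{ff2} is then immediate: if such a connection exists, Theorem~\ref{pp1} supplies \eqref{ff2} directly, together with the torsion \eqref{tordfnew}, the connection formula \eqref{newnbl} and the covariant derivatives \eqref{skew0}. Hence the only real content is the converse—assuming \eqref{ff2}, produce the connection by verifying that \eqref{skew1} holds.

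For the converse I would first extract the auxiliary relations \eqref{xi1AP} from \eqref{ff2}. Using $(\nabla^g_X\eta)Y=-g((\nabla^g_XA)AY,\xi)=-(\nabla^g_XF)(AY,\xi)$ and evaluating \eqref{ff2} with $(Y,Z)=(AY,\xi)$, the identities $A\xi=0$, $A^2=id-\eta\otimes\xi$ and $g(\xi,\xi)=1$ annihilate most summands and leave $(\nabla^g_X\eta)Y=-\tfrac13 dF(X,AY,\xi)+\tfrac16 dF(AX,Y,\xi)$; antisymmetrizing in $X,Y$ then yields the $d\eta$-formula, and hence all of \eqref{xi1AP}.

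The main step is to insert \eqref{ff2} into the general Nijenhuis identity \eqref{EisenhN1}, which expresses $N$ solely through the Levi-Civita derivative $\nabla^g F$. Each of its four terms becomes a combination of $dF$ carrying several insertions of $A$; wherever an $A^2$ occurs I would reduce it by $A^2=id-\eta\otimes\xi$, splitting it into a plain piece and an $\eta$-contracted piece of the form $dF(\xi,\cdot,\cdot)$. I expect that, after using the antisymmetry of $dF$, all terms free of $\xi$ collapse to $\tfrac13\big[dF(X,Y,AZ)+dF(AX,Y,Z)+dF(X,AY,Z)+dF(AX,AY,AZ)\big]$, while the $dF(\xi,\cdot,\cdot)$-terms, rewritten through \eqref{xi1AP}, assemble into $-\eta(X)d\eta(Y,Z)-\eta(Y)d\eta(Z,X)+\eta(Z)d\eta(X,Y)$. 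Together these are exactly \eqref{skew1AP}, which is only \eqref{skew1} transcribed with the paracontact relations, so Theorem~\ref{pp1} returns the unique connection with the asserted data \eqref{tordfnew}, \eqref{newnbl} and \eqref{skew0}.

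The hard part will be the bookkeeping in this last step. The four terms of \eqref{EisenhN1} unfold into sixteen $dF$-monomials, of which eight carry an $A^2$; the $A^2$-splittings then have to be organized so that the resulting $dF(\xi,\cdot,\cdot)$ contributions recombine, through the two equal coefficients in \eqref{xi1AP}, into precisely the above $\eta\wedge d\eta$ combination with the correct signs, and so that the $\xi$-free terms settle at the uniform coefficient $\tfrac13$—in particular the coefficient of $dF(X,Y,AZ)$ must drop from its naive value $\tfrac23$ to $\tfrac13$ once the $A^2$-pieces are folded in. Matching these coefficients and signs is the only place where a slip is likely.
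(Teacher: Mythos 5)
Your proposal is correct, and it reaches the same intermediate target as the paper --- namely that \eqref{ff2} implies \eqref{skew1AP}, which is \eqref{skew1} rewritten via \eqref{xi1AP}, so that Theorem~\ref{pp1} can be applied --- but it gets there by a genuinely different route. The paper's proof imports the external structural formula \cite[Proposition~2.4]{Zam} (equation \eqref{ZamAP}, the paracontact analogue of Blair's Lemma~6.1), substitutes \eqref{ff2} into it to obtain \eqref{ZamkAP}, an expression for $N(Y,Z,AX)$, and then still needs the identity $N(X,Y,\xi)=-d\eta(X,Y)$ and the properties \eqref{apcon} to permute the arguments back and identify the result with \eqref{skew1AP}. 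You instead substitute \eqref{ff2} into the paper's own universal identity \eqref{EisenhN1}, which expresses $N(X,Y,Z)$ through $\nabla^g F$ alone and is valid on any generalized Riemannian manifold (it is just the Nijenhuis tensor written via a torsion-free connection), and then reduce every $A^2$ by $A^2=id-\eta\otimes\xi$. I checked your bookkeeping and it closes exactly as you predict: the four $\xi$-free families each settle at coefficient $\tfrac13$ (in particular the coefficient of $dF(X,Y,AZ)$ does drop from $\tfrac23$ to $\tfrac13$ through the two $-\tfrac16$ contributions of the $A^2$-splittings), and the $dF(\xi,\cdot,\cdot)$ remnants recombine through \eqref{xi1AP} into $-\eta(X)d\eta(Y,Z)-\eta(Y)d\eta(Z,X)+\eta(Z)d\eta(X,Y)$, giving \eqref{skew1AP} on the nose. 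What your route buys is self-containedness --- no appeal to \cite{Zam} and no argument-permutation step --- at the price of a longer direct expansion (sixteen monomials, eight $A^2$-reductions); the paper's route is computationally shorter precisely because Zamkovoy's formula already packages the almost paracontact identities, but it is correspondingly less self-contained. Your preliminary derivation of \eqref{xi1AP} from \eqref{ff2} (evaluating at $(AY,\xi)$, using $A\xi=0$ and $(\nabla^g_X\eta)Y=-g((\nabla^g_XA)AY,\xi)$, then antisymmetrizing) coincides with the paper's.
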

\begin{proof}
We  show that \eqref{ff2} implies \eqref{skew1AP} and then apply
Theorem~\ref{pp1}.

The general formula from \cite[Proposition~2.4]{Zam},  in our
notation, reads
\begin{equation}\label{ZamAP}
\aligned 2g((\nabla^g_XA)Y,Z)&=dF(X,Y,Z)+dF(X,AY,AZ)-N^{apc}(Y,Z,AX)\\
&+\Big[d\eta(AY,Z)+d\eta(Y,AZ)\Big]\eta(X)+d\eta(AY,X)\eta(Z)-d\eta(AZ,X)\eta(Y).
\endaligned
\end{equation}

Substitute (\ref{ff2}) into (\ref{ZamAP}) and notice
$N^{apc}(Y,Z,AX)=N(Y,Z,AX)$, we get
\begin{multline}\label{ZamkAP}
 N(Y,Z,AX)=\frac13 dF(X,Y,Z)+\frac 13dF(X,AY,AZ)+\frac13dF(AX,Y,AZ)+\frac13dF(AX,AY,Z)\\
+2d\eta(AY,Z)\eta(X)+d\eta(AY,X)\eta(Z)-d\eta(AZ,X)\eta(Y).
\end{multline}
The defining equation \eqref{nuj} of the Nijenhuis tensor together
with the properties listed in \eqref{apcon} and \eqref{xi1AP}
imply $N(X,Y,\xi)=-d\eta(AX,AY)=-d\eta(X,Y)$. Now, we easily get
the equivalences of \eqref{ZamkAP} and (\ref{skew1AP}). An
application of  Theorem~\ref{pp1} completes the proof.
\end{proof}
In view of Theorem~\ref{acnikap} it seems reasonable to give the
following
\begin{dfn}
An almost paracontact metric manifold $(M,g,\eta,\xi,A)$ is called
\emph{an almost paracontact NGT manifold with torsion} if the
following condition holds
\begin{multline*}
g((\nabla^g_XA)Y,Z)= \frac{1}{3} dF(X,Y,Z)+\frac{1}{3}
dF(X,AY,AZ)-\frac16dF(AX,Y,AZ)-\frac16dF(AX,AY,Z).
\end{multline*}
\end{dfn}
On an almost paracontact NGT manifold the equalities \eqref{xi1AP}
hold, the Nijenhuis tensor is given by \eqref{skew1AP} and the
Nijenhuis tensor $N(AX,AY,AZ)$  is totally skew-symmetric
determined by
\begin{multline*}
N(AX,AY,AZ)=\frac13dF(AX,AY,Z)+\frac13dF(X,AY,AZ)+\frac13dF(AX,Y,AZ)+\frac13dF(X,Y,Z)\\
+2\eta(Z)d\eta(AX,Y)+2\eta(Y)d\eta(AZ,X)+2\eta(X)d\eta(AY,Z).
\end{multline*}

An example can be obtained as follows, start with a Nearly
paraK\"ahler manifold $N^{2n}$ one obtains on
$M^{2n+1}\times\mathbb R^+$ an almost paracontact NGT structure
taking $\eta=dt$.

{\bf Acknowledgments.} S.Ivanov is visiting University of
Pennsylvania, Philadelphia. S.I. thanks UPenn for providing the
support and an excellent research environment during the final
stage  of the paper. S.I. is partially supported by Contract DFNI
I02/4/12.12.2014 and  Contract 168/2014 with the Sofia University
"St.Kl.Ohridski". M.Z. was partially supported by and the project
EUROWEB, and by Serbian Ministry of Education, Science, and
Technological Development, Projects 174012...

\end{document}